\documentclass[12pt]{amsart}
\usepackage[margin=1in]{geometry}
\usepackage{amsmath,amssymb,amsthm,mathtools}

\newtheorem{theorem}{Theorem}[section]
\newtheorem{lemma}[theorem]{Lemma}
\newtheorem{proposition}[theorem]{Proposition}
\newtheorem{corollary}[theorem]{Corollary}
\theoremstyle{definition}
\newtheorem{definition}[theorem]{Definition}

\theoremstyle{remark}
\newtheorem{remark}[theorem]{Remark}
\newtheorem{conjecture}[theorem]{Conjecture}
\numberwithin{equation}{section}
\newcommand{\D}{\mathbb{D}}
\newcommand{\T}{\mathbb{T}}
\newcommand{\C}{\mathbb{C}}
\newcommand{\R}{\mathbb{R}}

\newcommand{\dd}{\,dm}
\newcommand{\Pp}{\mathcal{P}_+}
\DeclareMathOperator{\arctanh}{arctanh}
\DeclareMathOperator{\sinc}{sinc}
\DeclareMathOperator{\sgn}{sgn}
\DeclareMathOperator{\Real}{Re}
\DeclareMathOperator{\Imag}{Im}

\title{Schwarz-contractivity of the Poisson operator}

\author{Leonid V. Kovalev}
\address{215 Carnegie, Department of Mathematics, Syracuse University, Syracuse, NY 13244, USA \\
ORCID: 0000-0001-8002-7155}
\email{lvkovale@syr.edu}

\subjclass[2020]{Primary 31A05; Secondary 42B15, 30H10}

\keywords{Schwarz lemma, Poisson integral, Riesz projection, Fourier
multiplier, harmonic mapping}

\begin{document}

\begin{abstract}
We say that the Poisson operator $P_r$ is \textit{Schwarz-contractive} from a
space $X$ to a space $Y$ if $\|P_r\|_{X\to Y}\le r$ holds for all $0<r<1$. The classical Schwarz lemma is the case $H^\infty_0\to H^\infty$, the subscript indicating zero mean. We are concerned with the non-holomorphic case $L^\infty_0 \to L^p$. For real functions Schwarz-contractivity holds up to the sharp exponent $p_{\mathbb R}=4.109\ldots$. For complex functions we prove it for $p\le 3$ and conjecture that the critical exponent is $4$. 
\end{abstract}

\maketitle

\section{Introduction}\label{sec:intro}

Let $\D=\{z\in\C\colon |z|<1\}$ and $\T=\partial\D$, and let $dm=d\theta/(2\pi)$
be the normalized measure on $\T$, so that all norms $\|\cdot\|_p$ below are
taken with respect to a probability measure. For $f\in L^1(\T)$ and $0<r<1$
let $P_rf$ denote the Poisson integral of $f$ evaluated on the circle of
radius $r$:
\begin{equation}\label{eq-poisson}
P_rf(\zeta)=\int_\T K_r(\zeta\overline\eta)f(\eta)\dd(\eta),
\qquad
K_r(e^{i\theta})=\frac{1-r^2}{1-2r\cos\theta+r^2},
\end{equation}
equivalently $\widehat{P_rf}(n)=r^{|n|}\widehat f(n)$ for $n\in\mathbb Z$.
Every bounded harmonic function on $\D$ is the Poisson integral of its
boundary values, and we use this identification throughout. We write
$L^\infty_0$ for the subspace of $L^\infty(\T)$ consisting of functions with
$\widehat f(0)=0$, and similarly $H^\infty_0$ for bounded holomorphic
functions vanishing at the origin.

\begin{definition}\label{def-schwarz-contractive}
The Poisson operator is \textit{Schwarz-contractive} from $X$ to $Y$ if
$\|P_rf\|_Y\le r\|f\|_X$ for all $f\in X$ and all $0<r<1$; that is, if
$\|P_r\|_{X\to Y}\le r$ for every radius.
\end{definition}

The classical Schwarz lemma is exactly the statement that $P_r$ is
Schwarz-contractive from $H^\infty_0$ to $H^\infty$, with equality for
$f(\zeta)=\zeta$. Harmonic functions do not obey it. By a theorem of
Heinz~\cite{Heinz}, see also Kalaj and Vuorinen~\cite[Lemma 1.4]{KalajVuorinen},
a harmonic map $u\colon\D\to\D$ with $u(0)=0$ satisfies
$|u(z)|\le\frac{4}{\pi}\arctan|z|$, and the bound is attained at every point. Thus, Schwarz-contractivity fails into $L^\infty$, by a factor that tends to $4/\pi$ as $r\to0$. 

However, the norm bound is restored once the target
$L^\infty$ is replaced by $L^p$ with $p$ not too large (clearly, $p=2$ works), which raises the question of how large $p$ may be. The present paper answers it for real functions and gives two-sided bounds for complex ones. 

Put
\begin{equation}\label{eq-Cp}
C_p=\int_\T\left|\frac4\pi\cos\theta\right|^p\dd
=\left(\frac4\pi\right)^p
\frac{\Gamma\left(\frac{p+1}2\right)}{\sqrt\pi\,\Gamma\left(\frac p2+1\right)},
\qquad p>0.
\end{equation}
The quantity $C_p^{1/p}=(4/\pi)\|\cos\theta\|_p$ increases strictly with $p$,
and $C_4=96/\pi^4<1$ while $C_5=16384/(15\pi^6)>1$. Hence there is a unique
$p_{\mathbb R}$ with $C_{p_{\mathbb R}}=1$; it lies in $(4,5)$ and equals
\begin{equation}\label{eq-pstar}
p_{\mathbb R}=4.1095286347363\ldots
\end{equation} 

\begin{theorem}\label{thm-real}
The Poisson operator is Schwarz-contractive from $L^\infty_0(\T;\R)$ to $L^p(\T)$ if and only if $p\le p_{\mathbb R}$.
\end{theorem}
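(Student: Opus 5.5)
The plan has two halves. Necessity comes from an explicit test function as $r\to0$. Sufficiency comes from reducing to a single extremal function and then proving a one-variable inequality.

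\emph{Necessity.} Take $f=\sgn(\cos\theta)$, which lies in $L^\infty_0(\T;\R)$ with $\|f\|_\infty=1$. Its Fourier series gives
\[
P_rf(e^{i\theta})=\frac4\pi\sum_{k\ge0}\frac{(-1)^k r^{2k+1}}{2k+1}\cos((2k+1)\theta)=\frac4\pi r\cos\theta+O(r^3)
\]
uniformly in $\theta$. Hence $\|P_rf\|_p/r\to \frac4\pi\|\cos\theta\|_p=C_p^{1/p}$ as $r\to0$. For $p>p_{\mathbb R}$ this limit exceeds $1$, because $C_p^{1/p}$ is strictly increasing and $C_{p_{\mathbb R}}=1$. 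So Schwarz-contractivity fails for such $p$.

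\emph{Sufficiency: reduction to one function.} Since $m$ is a probability measure, $\|\cdot\|_p$ is increasing in $p$, so it suffices to treat $p=p_{\mathbb R}$ (more generally any $p\ge2$). Fix $r$. The functional $f\mapsto\int_\T|P_rf|^p\dd$ is convex and weak-$*$ continuous on the weak-$*$ compact convex set $\{f \text{ real}:\|f\|_\infty\le1,\ \widehat f(0)=0\}$. Its supremum is therefore approached at extreme points, which are $f=\mathbf 1_E-\mathbf 1_{\T\setminus E}$ with $m(E)=1/2$. For such $f$ I would apply Baernstein's star-function rearrangement inequality for Poisson integrals: for convex $\Phi$,
\[
\int_\T\Phi(P_rf)\dd\le\int_\T\Phi(P_rf^\#)\dd,
\]
where $f^\#$ is the symmetric decreasing rearrangement. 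Here $f^\#=\sgn(\cos\theta)$ up to rotation. With $\Phi(x)=|x|^p$ this reduces everything to
\[
g_r(\theta)=P_r\sgn(\cos)(e^{i\theta})=\frac2\pi\arctan\frac{2r\cos\theta}{1-r^2},
\]
and it remains to prove $\int_\T|g_r|^{p_{\mathbb R}}\dd\le r^{p_{\mathbb R}}$ for all $0<r<1$.

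\emph{The one-variable inequality (main obstacle).} Pointwise comparison with $\frac4\pi r|\cos\theta|$ fails near $\cos\theta=0$, where $g_r$ is larger by the factor $(1-r^2)^{-1}$. It holds near $\cos\theta=\pm1$, where $g_r(0)=\frac4\pi\arctan r<\frac4\pi r$. My first attempt is to show that $h(r)=r^{-p}\int_\T|g_r|^p\dd$ is nonincreasing in $r$ for $p=p_{\mathbb R}$; since $h(0^+)=C_p=1$, this would finish the proof.

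Concretely, I would substitute $u=\cos\theta$ and write $h(r)=r^{-p}\frac{2}{\pi}\int_0^1\bigl(\frac2\pi\arctan(tu)\bigr)^p\frac{du}{\sqrt{1-u^2}}$ with $t=2r/(1-r^2)$. I would then differentiate in $r$ and try to sign the derivative, for instance via an integration by parts that moves the weight onto $\arctan$.

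A fallback is a distributional argument. The distribution functions of $|g_r|/r$ and $\frac4\pi|\cos\theta|$ should cross exactly once, with the former having the lighter upper tail. Combined with the exact $L^2$ comparison $\|g_r\|_2^2=\frac{16}{\pi^2}\sum_k\frac{r^{4k+2}}{(2k+1)^2}$, a single-crossing (Karlin–Novikoff type) lemma would propagate the inequality to the exponent $p_{\mathbb R}$, where the $r\to0$ limit is exactly $1$. Establishing this single crossing, or the sign of $h'$, is the step I expect to require the most careful computation.
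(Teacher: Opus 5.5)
Your necessity argument is correct. The reduction to $g_r$ is also valid: extreme points plus Baernstein's rearrangement inequality is a legitimate, if heavier, substitute for the paper's one-line Littlewood subordination through the strip map $\frac4\pi\arctan z$.

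The gap is that the one-variable inequality $\int_\T|g_r|^{p_{\mathbb R}}\dd\le r^{p_{\mathbb R}}$ is left unproved, and it is the entire content of sufficiency. Moreover, your first strategy cannot succeed. Besides $h(0^+)=C_{p_{\mathbb R}}=1$, you also have $h(1^-)=1$, because $g_r\to\sgn(\cos\theta)$ boundedly a.e.\ as $r\to1$. A nonincreasing $h$ would therefore be identically $1$, and it is not. Expand $\arctan(t\cos\theta)$ with $t=2r+2r^3+O(r^5)$ and use the Wallis ratio $\|\cos\theta\|_{p+2}^{p+2}/\|\cos\theta\|_p^p=\frac{p+1}{p+2}$; this gives
\[
h(r)=1-\frac{p(p-2)}{3(p+2)}\,r^2+O(r^4),\qquad p=p_{\mathbb R}.
\]
So $h$ dips below $1$ and has to climb back. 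The inequality is sharp at both ends, which is why the paper treats $s=r^2\in[0,\frac78]$ and $s\in[\frac78,1]$ by different arguments.

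Your fallback has the right mechanism but the wrong seed. Since $|g_r|/r$ and $\frac4\pi|\cos\theta|$ are increasing functions of the same variable $|\cos\theta|$, the single crossing is just the monotonicity of $\arctan(tx)/x$. The paper's Lemma~\ref{lem-ratio} is this idea in Chebyshev-inequality form; it gives $h(r)\le C_p\bigl(\|g_r/r\|_q^q/C_q\bigr)^{p/q}$ for $q\le p$. But this only propagates a seed $\|g_r/r\|_q^q\le C_q$, and at $q=2$ the seed points the wrong way. The correct identity is $\|g_r\|_2^2=\frac8{\pi^2}\sum_k\frac{r^{4k+2}}{(2k+1)^2}$; your constant $16/\pi^2$ forgets $\|\cos\theta\|_2^2=\frac12$. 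Hence $\|g_r/r\|_2^2>\frac8{\pi^2}=C_2$ for every $r>0$. Worse, for each fixed $q<p_{\mathbb R}$ one has $\|g_r/r\|_q^q\to1>C_q$ as $r\to1$, so no seed below $p_{\mathbb R}$ survives near the boundary.

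Two ingredients are missing.
\begin{itemize}
\item[(i)] A quartic seed $\|g_r/r\|_4^4\le C_4$ for $r^2\le\frac78$. The paper derives it from an explicit power series for the fourth moment, obtained via $\arctanh$, whose regrouped terms are nonpositive. This reduces to the coefficient inequality $\frac78\bigl(\sum_{j\le m}\frac1{2j-1}\bigr)^2\le\sum_{k\le m}\frac1k\sum_{j\le k}\frac1{2j-1}$.
\item[(ii)] A separate boundary estimate for $r^2\ge\frac78$:
\[
h(r)\le\Bigl\|\frac{g_r}{r}\Bigr\|_\infty^{p_{\mathbb R}-2}\Bigl\|\frac{g_r}{r}\Bigr\|_2^2
\le\Bigl(\frac{4\arctan r}{\pi r}\Bigr)^3\Bigl\|\frac{g_r}{r}\Bigr\|_2^2\le1 .
\]
The last step combines a linear bound on $\frac{4\arctan r}{\pi r}-1$ in terms of $1-r^2$ with convexity of $\|g_r/r\|_2^2$ as a power series in $r^2$. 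That series equals $1$ at $r=1$, and one also needs a numerical bound for it at $r^2=\frac78$.
\end{itemize}
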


Here and below $L^p(\T;\R)$ denotes Lebesgue spaces of real-valued functions, $L^p(\T)$ stands for spaces of complex-valued functions, and the subscript $0$ indicates the zero-mean subspace. The exponent 
$p_{\mathbb R}$ does not carry over to the complex case: see 
Proposition~\ref{prop-complex-obstruction}.

\begin{theorem}\label{thm-complex}
The Poisson operator is Schwarz-contractive from $L^\infty_0(\T)$ to $L^3(\T)$.
\end{theorem}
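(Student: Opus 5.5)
The plan is to reduce to unimodular boundary data and then interpolate between an exact $L^2$ identity and an $L^4$ estimate, using the unimodularity of $f$ to control the $L^4$ part. I have not checked that the $L^4$ step closes, so the whole argument hinges on it.

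\textbf{Reduction to unimodular data.} Fix $0<r<1$ and set $u=P_rf$. The functional $f\mapsto\|P_rf\|_3^3$ is convex and weak-$*$ continuous on the unit ball of $L^\infty_0(\T)$, so its supremum is approached at extreme points. I would therefore first reduce to $f$ with $|f|=1$ a.e. and $\widehat f(0)=0$, at least approximately. The natural way is to write an arbitrary $f$ in the ball as an average of unimodular mean-zero functions. This is the one soft step and may need a short perturbation argument, since the mean-zero constraint interacts with the extreme-point structure.

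\textbf{Two reference estimates.} With the reduction in hand, I would use two estimates.

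\emph{(i) The $L^2$ identity.} Parseval gives
\[
\|u\|_2^2=\sum_{n\ne0}r^{2|n|}|\widehat f(n)|^2\le r^2\sum_{n\neq 0}|\widehat f(n)|^2=r^2\|f\|_2^2=r^2 .
\]
More precisely, it gives $r^2-\|u\|_2^2=\sum_{n\ne0}(r^2-r^{2|n|})|\widehat f(n)|^2$, which is a gain from all frequencies $|n|\ge2$.

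\emph{(ii) The real case.} For every angle $\alpha$, the function $\Real(e^{-i\alpha}f)$ is a real mean-zero function bounded by $1$. Theorem~\ref{thm-real} then yields $\int|\Real(e^{-i\alpha}u)|^p\dd\le r^p$ for $p\le p_{\mathbb R}$.

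Averaging (ii) over $\alpha$ alone loses a constant factor $1/\int|\cos|^p$, so it cannot suffice on its own. It must be combined with the extra information $|f|=1$, which forces $\Real(e^{-i\alpha}f)$ to be far from $\pm1$ on large sets unless $f$ is essentially real up to rotation.

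\textbf{Main estimate.} I would write $\int|u|^3\le\|u\|_2\,\|u\|^2_4$. By (i) the first factor is at most $r$, so it suffices to show $\|u\|_4^4\le r^4$ for unimodular mean-zero $f$. To attack this, I would expand $\int|u|^4=\int|u^2|^2$ in Fourier coefficients:
\[
\int|u|^4\dd=\sum_k\Bigl|\sum_{m+n=k}r^{|m|+|n|}\widehat f(m)\widehat f(n)\Bigr|^2 .
\]
I would then compare this term by term with the same expression at $r=1$, which equals $\int|f|^4=1$. Since $|m|+|n|\ge 2$ whenever $m,n\neq0$, each weight carries at least a factor $r^2$. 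The danger lies in the cross terms, where the weights $r^{|m|+|n|}$ and $r^{|m'|+|n'|}$ differ and the phases of $\widehat f$ can align. There I would use the constraint $\int|f|^4=\bigl(\int|f|^2\bigr)^2$, i.e.\ that $|f|^2$ is constant, which says exactly that $f\bar f$ has no nonzero Fourier modes.

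I expect this step to be the main obstacle. Indeed $\|u\|_4\le r$ is essentially the conjectured critical case, so the Cauchy--Schwarz split may be too lossy. If it fails, the fallback is to keep the $L^2$ gain from (i) and interpolate more cleverly. One option is $\int|u|^3\le\|u\|_\infty\|u\|_2^2$ with the Heinz bound $\|u\|_\infty\le\frac4\pi\arctan r$; this alone overshoots by the factor $\frac{4}{\pi}\frac{\arctan r}{r}$. I would then absorb that overshoot into the deficit $\sum_{n\ne0}(r^2-r^{2|n|})|\widehat f(n)|^2$, since near-extremality in the Heinz bound forces substantial mass at high frequencies.
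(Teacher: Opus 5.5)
Your argument has a genuine gap at exactly the step you flag. After the split $\int_\T|u|^3\dd\le\|u\|_2\|u\|_4^2$, you need $\|P_rf\|_4\le r$ for mean-zero $f$ with $|f|=1$. That is precisely Conjecture~\ref{conj-complex}, which the paper leaves open; only its infinitesimal case $r\to0$ is known.

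The Fourier expansion of $\int_\T|u|^4\dd$ does not give you a mechanism. The inner sums $\sum_{m+n=k}r^{|m|+|n|}\widehat f(m)\widehat f(n)$ cannot be compared term by term with their $r=1$ counterparts. The quadratic constraint that $|f|^2$ has no nonzero modes gives no evident control of the cross terms. Your real-case input (ii) is never actually used.

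The fallback fails outright. Take $f=\sgn\cos\theta$, which is unimodular, real and mean-zero. It is the Heinz extremal, so $\|P_rf\|_\infty=\frac4\pi\arctan r$ exactly. By Lemma~\ref{lem-B2}, $\|P_rf\|_2^2=r^2B_2(r^2)\sim\frac8{\pi^2}r^2$. Hence $\|P_rf\|_\infty\|P_rf\|_2^2/r^3\to 32/\pi^3\approx1.032>1$ as $r\to0$. Here the $L^2$ deficit is already built into $\|P_rf\|_2^2$, so nothing is left to absorb the Heinz overshoot. The $L^\infty\times L^2$ H\"older split is itself too lossy, even though $\|P_rf\|_3\le r$ does hold for this $f$ by Theorem~\ref{thm-real}.

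The missing idea is to avoid estimating $P_rf$ in $L^4$ at all. Instead, write $P_r$ as a positive combination of operators that are each contractive into $L^3$. The paper uses the Abel-mean identity $P_r=(1-r)\sum_{N\ge0}r^NS_N$. The $N=0$ term vanishes when $\widehat f(0)=0$, and the remaining weights sum to exactly $r$.

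Each symmetric partial sum factors as $S_N=Q_{\le N}Q_{\ge -N}$, a product of a modulated Riesz projection and a reflected, modulated one. Marzo--Seip gives $\|Q_{\ge-N}f\|_4\le\|f\|_\infty$, and Fu--Li gives $\|Q_{\le N}g\|_3\le\|g\|_4$. Hence $\|S_Nf\|_3\le\|f\|_\infty$, and the triangle inequality yields $\|P_rf\|_3\le r\|f\|_\infty$.

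Your reduction to unimodular data is harmless but unnecessary. On a nonatomic space the extreme points of the unit ball of $L^\infty_0$ are unimodular, so the Bauer maximum principle applies.
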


Theorem~\ref{thm-complex} is proved in \S\ref{sec:complex}. It does not assert that $3$ is optimal. If $p_\C$ denotes the supremum of the exponents for which the Poisson operator is Schwarz-contractive from $L^\infty_0(\T)$ to $L^p$, then
\begin{equation}\label{eq-pc-bounds}
3\le p_\C\le 4.
\end{equation}
In \S\ref{sec:questions} we conjecture that $p_\C=4$.

Without the restriction $\widehat f(0)=0$, the norm of $P_r$ from
$L^\infty$ to $L^p$ is $1$ for every $0<p\le\infty$: the Poisson kernel
is nonnegative with integral $1$, and $P_r1=1$ gives equality. But one can
consider the centered Poisson operator $\widetilde P_rf=P_rf-\widehat f(0)$ instead.

\begin{theorem}\label{thm-centered}
The centered Poisson operator is Schwarz-contractive from $L^\infty(\T;\R)$ to $L^p(\T)$ if and only if $p\le 3$.
Equivalently, every harmonic function $u\colon \D\to [-1, 1]$ satisfies
\[
\int_\T|u(r\zeta)-u(0)|^3\dd(\zeta)\le r^3
\]
where the exponent $3$ is best possible.
\end{theorem}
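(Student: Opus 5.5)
The plan has two parts: sharpness for every $p>3$, by a boundary-limit argument, and the inequality for $p=3$, by reducing to an explicit one-parameter family of extremals.

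\emph{Sharpness.} Take $f=2\chi_E-1$, where $E\subset\T$ has measure $a$. Then $\widehat f(0)=2a-1$, and $f-\widehat f(0)$ takes the value $2-2a$ on $E$ and $-2a$ off $E$. Hence
\[
\int_\T|f-\widehat f(0)|^p\dd=2^p a(1-a)\bigl[(1-a)^{p-1}+a^{p-1}\bigr]=:g_p(a).
\]
Put $a=\tfrac12+t$. For $p=3$ one computes $g_3=1-16t^4\le1$, with equality only at $t=0$. For $p>3$, $g_p(\tfrac12)=1$ and $g_p''(\tfrac12)>0$; for instance $g_4=1+8t^2-\dots$. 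I will verify the second-derivative sign for general $p>3$ by a short Taylor expansion. So for $p>3$ and suitable $a$ near $\tfrac12$ we get $\|f-\widehat f(0)\|_p>1$. Since $P_rf\to f$ in $L^p$ as $r\to1$, this gives $\|\widetilde P_rf\|_p>r$ for $r$ close to $1$. Smaller $p$ follow from the case $p=3$, because the measure is a probability measure and norms increase with $p$.

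\emph{Reduction for $p=3$.} Fix $r$ and the mean $m=\widehat f(0)$. The functional $f\mapsto\int_\T|P_rf-m|^3\dd$ is convex on the convex, weak-$*$ compact set $\{f\colon -1\le f\le1,\ \widehat f(0)=m\}$. It is also weak-$*$ upper semicontinuous, because $P_r$ maps weak-$*$ convergent sequences to uniformly convergent ones. Its supremum is therefore approached at extreme points of that set, which are the functions $f=2\chi_E-1$ with $|E|=a=(1+m)/2$.

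Next I apply Baernstein's star-function (rearrangement) theorem. For convex $\Phi$, it gives $\int\Phi(P_rf)\le\int\Phi(P_rf^\#)$, where $f^\#$ is the symmetric decreasing rearrangement. The function $\Phi(x)=|x-m|^3$ is convex, so we may take $E$ to be an arc $\{|\theta|<\pi a\}$. Then
\[
u_a(re^{i\theta})=2\omega(re^{i\theta})-1,
\]
where $\omega$ is the harmonic measure of the arc, given by the standard explicit arctangent formula.

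\emph{The explicit inequality: the main obstacle.} It remains to prove
\[
F(r,a):=\int_\T|u_a(r\zeta)-(2a-1)|^3\dd(\zeta)\le r^3,\qquad 0<r<1,\ 0<a<1.
\]
I know the boundary behaviour. At $r=1$, $F=g_3(a)\le1$, with equality at $a=\tfrac12$. As $r\to0$,
\[
F\sim r^3\Bigl(\tfrac{4}{\pi}\sin\pi a\Bigr)^3\int_\T|\cos\theta|^3\dd=r^3C_3\sin^3\pi a,
\]
and this is at most $C_3r^3<r^3$.

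To cover intermediate $r$ I would try to show that $r^{-3}F(r,a)$ is nondecreasing in $r$; together with the value at $r=1$, that gives the claim. The first attempt is to write $|x|^3=x^2|x|$ and split $\T$ by the sign of $u_a-m$; by symmetry this splitting happens along the level curve $u_a=m$. On each piece, $x^3$ is a polynomial in the harmonic function $u_a$, so one can try to express $\frac{d}{dr}F$ through Green's formula and boundary terms on that level curve.

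If monotonicity resists, the fallback is a Fourier computation. Expand $u_a-m=\sum_{n\ne0}\frac{\sin\pi na}{\pi n}r^{|n|}e^{in\theta}$ and bound $\int|h|^3\le\|h\|_4^2\|h\|_2$, using an explicit $\|h\|_4^4$. Failing that, a rigorous numerical verification on the compact parameter region away from the two boundary regimes, combined with the asymptotics above near $r=0$ and $r=1$. I expect this two-parameter inequality to be the genuinely hard step. The near-equality as $r\to1$, $a\to\frac12$ makes any crude estimate fail there, so that corner will need a careful second-order expansion.
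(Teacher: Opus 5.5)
Your sharpness argument is correct and is the paper's (your $g_3=1-16t^4$ and $g_p=1+2p(p-3)t^2+O(t^4)$ agree with its expansion, the mean being $2t$). Your reduction to arcs via Bauer's maximum principle and Baernstein's rearrangement inequality is also sound.

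\textbf{The gap.} The inequality $F(r,a)\le r^3$, which is the entire content of the case $p=3$, is never proved, and neither of your analytic routes can prove it.

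First, $r^{-3}F(r,a)$ is \emph{not} nondecreasing in $r$. For $a=\frac12$ we have $u_{1/2}=U$, so $r^{-3}F(r,\frac12)=B_3(r^2)$. Expanding $Y_s=\frac4\pi|\cos\theta|\bigl(1+s-\frac43s\cos^2\theta\bigr)+O(s^2)$ and using $\int_\T|\cos\theta|^{p+2}\dd=\frac{p+1}{p+2}\int_\T|\cos\theta|^p\dd$ gives $B_p(s)=C_p\bigl(1-\frac{p(p-2)}{3(p+2)}s+O(s^2)\bigr)$. Hence $B_3(s)=C_3\bigl(1-\frac s5+O(s^2)\bigr)$, which decreases near $s=0$. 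As a check, for $p=4$ this reproduces the term $-\frac49 s$ of the series for $B_4/C_4$ in Lemma~\ref{lem-series}.

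Second, the Cauchy--Schwarz bound $\|h\|_4^2\|h\|_2$ tends, as $r\to1$, to $\sqrt{g_4(a)g_2(a)}=\sqrt{(1-4t^2)(1+8t^2-48t^4)}=1+2t^2+O(t^4)$. This exceeds $1$ for small $t\ne0$, so the bound exceeds $r^3$ near the corner $(r,a)=(1,\frac12)$. Any estimate routed through $L^4$ inherits the failure at $p>3$. A numerical check is not a proof, and it faces the same degenerate equality point. (Minor: the Fourier coefficients of $u_a-m$ are $\frac{2\sin\pi na}{\pi n}r^{|n|}$, not $\frac{\sin\pi na}{\pi n}r^{|n|}$.)

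\textbf{The missing idea} is to linearize in the kernel rather than search for extremal $f$. The paper writes $K_r=K_r(\pi)+\int_0^\pi w_r(t)\chi_t\,dt$, with $w_r(t)=-\frac t\pi K_r'(t)\ge0$ and $\chi_t$ the normalized indicator of $\{|\theta|<t\}$. This gives $\widetilde P_rf=\int_0^\pi w_r(t)\bigl(A_tf-\widehat f(0)\bigr)\,dt$, where $A_t$ is the arc average. By Minkowski's inequality and $\int_0^\pi w_r(t)\sinc t\,dt=\widehat{K_r}(1)=r$, it suffices to prove the one-parameter estimate $\|A_tf-\widehat f(0)\|_3\le\sinc t\,\|f\|_\infty$ for every real $f$.

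For $t\le\pi/2$ this follows because $h=A_tf$ is Lipschitz with $|h'|\le1/t$. Let $[c-b,c+b]$ be the range of $h$ and $g=(h-c)/b$. The two arcs joining points where $g=\pm1$ force $\int_\T\psi(g)\dd\ge\frac{bt}\pi\int_{-1}^1\psi$ for every $\psi\ge0$. Applying this to the gap between $|x-\mu|^3$ and its chord gives $\|h-\widehat h(0)\|_3^3\le1-\frac{3t}{2\pi}\le\cos^4\frac t2\le(\sinc t)^3$. The range $t>\pi/2$ follows from the complementary-arc identity.

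This handles all $r$ and all $f$ at once, with no two-parameter extremal problem and no asymptotic analysis at $(r,a)=(1,\frac12)$.
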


Theorem~\ref{thm-real} is proved in Section~\ref{sec:real}
via Littlewood's subordination principle and monotonicity arguments with respect to $r$. 
Section~\ref{sec:centered} proves Theorem~\ref{thm-centered} by representing the Poisson kernel as a positive mixture of arc averages. In Section~\ref{sec:complex}, Theorem~\ref{thm-complex} is proved using a theorem of Marzo and Seip~\cite{MarzoSeip} and a recent breakthrough by Fu and Li~\cite{FuLi}.

\section{Real functions with zero mean}\label{sec:real}

\subsection{Reduction by  subordination}\label{sec:reduction}

The conformal map $\Phi(z)=(4/\pi)\arctan z$ takes $\D$ onto the strip
$\{w\colon |\Real w|<1\}$ and fixes the origin. Its real part is
\begin{equation}\label{eq-U}
U(re^{i\theta})=\frac2\pi\arctan\frac{2r\cos\theta}{1-r^2},
\end{equation}
the Poisson integral of $\sgn\cos\theta$. On the positive axis it simplifies to
$U(r)=\frac{4}{\pi}\arctan r$.

\begin{proposition}\label{prop-subordination}
Let $1\le p<\infty$ and $0<r<1$. Then
\[
\sup\left\{\|P_rf\|_p\colon f\in L^\infty_0(\T;\R),\ \|f\|_\infty\le1\right\}
=\|P_r(\sgn\cos\theta)\|_p=\|U(r\,\cdot)\|_p .
\]
\end{proposition}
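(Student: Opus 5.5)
The plan is to use Littlewood's subordination principle together with the fact that $\Phi(z)=(4/\pi)\arctan z$ maps $\D$ conformally onto the strip $S=\{|\Real w|<1\}$.

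First, the lower bound. The function $\sgn\cos\theta$ lies in $L^\infty_0(\T;\R)$, because its mean is zero by the symmetry $\theta\mapsto\pi-\theta$, and its sup norm is $1$. By \eqref{eq-U} its Poisson integral is $U$. So the supremum is at least $\|U(r\,\cdot)\|_p$.

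Next, the upper bound. Fix $f\in L^\infty_0(\T;\R)$ with $\|f\|_\infty\le1$, and let $u=P[f]$. Then $u$ is harmonic on $\D$ with $-1\le u\le1$ and $u(0)=0$. Two small reductions:
\begin{itemize}
\item If $u$ touches $\pm1$ anywhere, the maximum principle forces $u$ to be constant, hence $u\equiv0$, and there is nothing to prove. So assume $|u|<1$.
\item Let $v$ be the harmonic conjugate with $v(0)=0$, and set $F=u+iv$. Then $F$ is holomorphic from $\D$ into $S$ with $F(0)=0=\Phi(0)$.
\end{itemize}
Since $\Phi$ is a conformal bijection onto $S$, the map $\omega=\Phi^{-1}\circ F$ is a self-map of $\D$ fixing $0$. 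Thus $F=\Phi\circ\omega$ is subordinate to $\Phi$.

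Now take real parts: $u=U\circ\omega$, where $U=\Real\Phi$ is harmonic and real. The function $|U|^p$ is subharmonic for $p\ge1$, since it is a convex increasing function of $|U|$ and $|U|$ is subharmonic. Littlewood's subordination principle says that for a subharmonic $s$ and a self-map $\omega$ of $\D$ with $\omega(0)=0$,
\[
\int_\T s(\omega(r\zeta))\dd(\zeta)\le\int_\T s(r\zeta)\dd(\zeta).
\]
Applying this with $s=|U|^p$ gives $\|u(r\,\cdot)\|_p\le\|U(r\,\cdot)\|_p$. The finiteness caveat is harmless here, since $U$ is bounded.

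Littlewood's principle is usually stated for subharmonic functions on all of $\D$. That is the case here, so I expect no real obstacle. The only points needing care are the degenerate case $|u|=1$, which is handled above, and checking that $\sgn\cos\theta$ really has zero mean and Poisson integral $U$. Both follow from the formula \eqref{eq-U}: $U$ is the real part of $\Phi$, and it has the right boundary values, namely $\pm1$ according to the sign of $\cos\theta$.
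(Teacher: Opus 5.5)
Your proposal is correct and follows the paper's proof essentially step by step. Both use the analytic completion $F=u+iv$ with $F(0)=0$, the factorization $u=U\circ\omega$ with $\omega=\Phi^{-1}\circ F$ fixing the origin, Littlewood's subordination theorem applied to the subharmonic function $|U|^p$, and attainment of the supremum by $\sgn\cos\theta$. Your extra checks (that $|u|<1$ by the maximum principle, and that $\sgn\cos\theta$ has mean zero and Poisson integral $U$) are sound and only make explicit what the paper states briefly.
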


\begin{proof}
Let $u=P[f]$ with $f$ as above. The maximum principle gives $|u|<1$, so the
analytic completion $F=u+iv$ with $F(0)=0$ maps $\D$ into the strip
$\{|\Real w|<1\}$. Hence $\omega=\Phi^{-1}\circ F$ is a holomorphic self-map
of $\D$ with $\omega(0)=0$, and $u=U\circ\omega$. Thus $u$ is subordinate to
$U$ in the sense of Littlewood~\cite[p. 10]{DurenHp}.

Since $x\mapsto|x|^p$ is convex for $p\ge1$ and $U$ is harmonic, $|U|^p$ is
subharmonic, and $|\omega(z)|\le|z|$ by Schwarz's lemma. Littlewood's
subordination theorem in its subharmonic form, inequality~(6) in the proof
of~\cite[Theorem 1.7]{DurenHp}, therefore gives
\begin{equation}\label{eq-subordination}
\int_\T|u(r\zeta)|^p\dd(\zeta)\le\int_\T|U(r\zeta)|^p\dd(\zeta).
\end{equation}
Since the function $f(\theta)=\sgn\cos\theta$ is admissible in the supremum, equality  is attained.
\end{proof}

It is convenient to rescale. For $0<s<1$ put
\begin{equation}\label{eq-Bp}
Y_s(\theta)=\frac{\left|U(\sqrt s\,e^{i\theta})\right|}{\sqrt s},
\qquad
B_p(s)=\int_\T Y_s^p\dd .
\end{equation}
The function $B_p$ extends continuously
to $[0,1]$, with
\begin{equation}\label{eq-endpoints}
B_p(0)=C_p,\qquad B_p(1)=1,
\end{equation}
because $U(re^{i\theta})/r\to(4/\pi)\cos\theta$ uniformly as $r\to0$, while
$|U(re^{i\theta})|\to1$ for almost every $\theta$ as $r\to1$.
With $s=r^2$,
Proposition~\ref{prop-subordination} says that
$\|P_r\|_{L^\infty_0(\T;\R)\to L^p}=r\,B_p(s)^{1/p}$ for $p\ge1$. We record
the consequence.

\begin{corollary}\label{cor-reduction}
For $1\le p<\infty$, the Poisson operator is Schwarz-contractive from
$L^\infty_0(\T;\R)$ to $L^p$ if and only if $B_p\le1$ on $[0,1]$. In
particular this fails for every $p>p_{\mathbb R}$, since then $B_p(0)=C_p>1$.
\end{corollary}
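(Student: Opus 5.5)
The corollary follows directly from Proposition~\ref{prop-subordination} together with the endpoint values \eqref{eq-endpoints}.

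\emph{Step 1: the operator norm.} Fix $1\le p<\infty$ and $0<r<1$, and set $s=r^2$. Proposition~\ref{prop-subordination} identifies the operator norm as
\[
\|P_r\|_{L^\infty_0(\T;\R)\to L^p}=\|U(r\,\cdot)\|_p .
\]
By the definition \eqref{eq-Bp}, with $\sqrt s=r$, we have $|U(re^{i\theta})|=r\,Y_s(\theta)$. Hence
\[
\|U(r\,\cdot)\|_p=r\,\|Y_s\|_p=r\,B_p(s)^{1/p}.
\]

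\emph{Step 2: the equivalence.} By Definition~\ref{def-schwarz-contractive}, Schwarz-contractivity means $r\,B_p(r^2)^{1/p}\le r$ for all $0<r<1$. Dividing by $r>0$ and raising to the power $p$, this is equivalent to $B_p(s)\le1$ for all $s\in(0,1)$. The endpoints need separate attention.
\begin{itemize}
\item At $s=1$ we have $B_p(1)=1$ by \eqref{eq-endpoints}, so the condition holds there automatically.
\item At $s=0$, the value $B_p(0)=C_p$ is the limit of $B_p(s)$ as $s\to0^+$, by the continuity recorded before \eqref{eq-endpoints}. So $B_p\le1$ on $(0,1)$ forces $B_p(0)\le1$. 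Conversely, $B_p\le 1$ on $[0,1]$ trivially gives it on $(0,1)$.
\end{itemize}
Thus Schwarz-contractivity is equivalent to $B_p\le1$ on $[0,1]$.

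\emph{Step 3: failure for $p>p_{\mathbb R}$.} The map $p\mapsto C_p^{1/p}$ is strictly increasing and $C_{p_{\mathbb R}}=1$, so $C_p>1$ for every $p>p_{\mathbb R}$. Then $B_p(0)=C_p>1$, so the condition of Step~2 fails. Concretely, by continuity $B_p(s)>1$ for all sufficiently small $s>0$, which gives $\|P_r\|>r$ for small $r$.

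The only step needing any care is the endpoint $s=0$ in Step~2, which is handled by the continuity of $B_p$ at $0$. The restriction $p\ge1$ enters only through Proposition~\ref{prop-subordination}.
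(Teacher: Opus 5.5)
Your proposal is correct and follows essentially the same route as the paper: the paper obtains the corollary directly from the identity $\|P_r\|_{L^\infty_0(\T;\R)\to L^p}=r\,B_p(r^2)^{1/p}$, which comes from Proposition~\ref{prop-subordination}, together with the endpoint values \eqref{eq-endpoints}. Your explicit continuity argument at $s=0$ spells out a step the paper leaves implicit.
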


Two further properties of $Y_s$ will be used. Writing
\begin{equation}\label{eq-Yform}
Y_s(\theta)=\frac{2}{\pi\sqrt s}\arctan\left(t\,|\cos\theta|\right),
\qquad t=\frac{2\sqrt s}{1-s},
\end{equation}
we see that $Y_s$ is a fixed increasing function of $|\cos\theta|$. Its maximum is at $\theta=0$ and admits the
integral representation
\begin{equation}\label{eq-sup}
\phi(s):=\|Y_s\|_\infty=\frac4\pi\,\frac{\arctan\sqrt s}{\sqrt s}
=\frac4\pi\int_0^1\frac{dx}{1+sx^2},
\end{equation}
from which $\phi$ is decreasing on $[0,1]$, with $\phi(0)=4/\pi$
and $\phi(1)=1$.

\subsection{A comparison of moment ratios}\label{sec:ratio}

By~\eqref{eq-Yform} all the functions $Y_s$ arise from the single function
$g=|\cos\theta|$ by applying $x\mapsto\arctan(tx)$ and rescaling. As $t$
grows, this map compresses the upper range of $g$, so the higher moments of
$Y_s$ grow more slowly than those of $g$. The next lemma is the quantitative
form of this observation.

\begin{lemma}\label{lem-ratio}
Let $g\ge0$ be a bounded measurable function on $\T$, not almost everywhere
zero, and let $p\ge q>0$. Then
\[
t\mapsto\frac{\|\arctan(tg)\|_p}{\|\arctan(tg)\|_q}
\]
is nonincreasing on $(0,\infty)$, and hence is at most its limiting value
$\|g\|_p/\|g\|_q$ as $t\downarrow0$.
\end{lemma}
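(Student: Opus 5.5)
The plan is to show that the logarithmic derivative in $t$ of $\|\arctan(tg)\|_p$ is a nonincreasing function of $p$. The monotonicity of the ratio then follows at once. Put $h=h_t=\arctan(tg)$, which takes values in $[0,\pi/2)$, and $M_p(t)=\int_\T h^p\dd$. Since $g$ is bounded and not a.e.\ zero, $M_p(t)>0$, and differentiation under the integral sign is justified by dominated convergence: for $t$ in a compact subinterval of $(0,\infty)$ the integrand $p\,h^{p-1}g/(1+t^2g^2)$ is bounded by a constant (using $h\le tg$ when $p<1$).

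The key identity comes from writing $tg=\tan h$:
\[
\frac{g}{1+t^2g^2}=\frac1t\,\tan h\cos^2 h=\frac{1}{t}\,h\,\psi(h),\qquad \psi(x)=\frac{\sin 2x}{2x}.
\]
It follows that
\[
\frac{d}{dt}\log\|h_t\|_p=\frac{1}{p}\frac{M_p'(t)}{M_p(t)}=\frac1t\,\frac{\int_\T h^p\,\psi(h)\dd}{\int_\T h^p\dd}=\frac1t\,\mathbb E_{\mu_p}[\psi(h)],
\]
where $\mu_p$ is the probability measure proportional to $h^p\,dm$. Hence
\[
\frac{d}{dt}\log\frac{\|h_t\|_p}{\|h_t\|_q}=\frac1t\Bigl(\mathbb E_{\mu_p}[\psi(h)]-\mathbb E_{\mu_q}[\psi(h)]\Bigr),
\]
and it suffices to show that this difference is $\le 0$ when $p\ge q$.

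For this comparison, note that $d\mu_p=c\,h^{p-q}\,d\mu_q$ with $c=M_q/M_p$. The density $h^{p-q}$ is a nondecreasing function of $h$. The function $\psi$ is nonincreasing on $[0,\pi/2)$, because $\sin y/y$ decreases on $[0,\pi]$. Chebyshev's association inequality under $\mu_q$ applies to these two oppositely monotone functions of the single variable $h$:
\[
\mathbb E_{\mu_q}[h^{p-q}\psi(h)]\le \mathbb E_{\mu_q}[h^{p-q}]\,\mathbb E_{\mu_q}[\psi(h)].
\]
This is exactly $\mathbb E_{\mu_p}[\psi(h)]\le\mathbb E_{\mu_q}[\psi(h)]$. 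The inequality itself can be verified directly as $\iint (a(x)-a(y))(b(x)-b(y))\,d\mu_q(x)\,d\mu_q(y)\le 0$. The set where $g=0$ carries no $\mu_q$-mass, so it causes no issue.

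Finally, $\arctan(tg)/t\to g$ uniformly as $t\downarrow 0$, because $g$ is bounded. So the ratio tends to $\|g\|_p/\|g\|_q$, and monotonicity gives the stated bound. The only real step is spotting the substitution $tg=\tan h$, which turns the derivative into an average of the decreasing function $\psi(h)$ against $h^p$-weighted measures. After that the argument is routine, so I expect no serious obstacle beyond the justification of differentiating under the integral, handled above.
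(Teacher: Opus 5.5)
Your argument is correct and is essentially the paper's proof. Under the substitution $tg=\tan h$, your weight $\psi(h)=\sin(2h)/(2h)$ is exactly the paper's $\kappa(tg)=tg/\bigl((1+t^2g^2)\arctan(tg)\bigr)$, so the log-derivative formula and the Chebyshev step (pairing $h^{p-q}$ with a decreasing weight) coincide; the only difference is that you get monotonicity from $\sin y/y$ on $[0,\pi]$ rather than by differentiating $\kappa$.

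One small correction concerns the case $p<1$. There the inequality $h\le tg$ points the wrong way for bounding $h^{p-1}$. Use your own identity instead, which gives $\partial_t(h^p)=p\,h^p\psi(h)/t\le p(\pi/2)^p/t$, a bound uniform on compact subintervals of $(0,\infty)$.
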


\begin{proof}
Write $G=G_t=\arctan(tg)$ and set $\kappa(0)=1$ and
\[
\kappa(x)=\frac{x}{(1+x^2)\arctan x},\qquad x>0 .
\]
Then $\kappa$ is strictly decreasing, since
\[
\kappa'(x)=\frac{(1-x^2)\arctan x-x}{(1+x^2)^2(\arctan x)^2}<0 :
\]
indeed $(1-x^2)\arctan x\le\arctan x<x$ for $0<x<1$, and
$(1-x^2)\arctan x\le0<x$ for $x\ge1$. Because
$t\,\partial_tG=tg/(1+t^2g^2)=G\,\kappa(tg)$, differentiating under the
integral sign (legitimate since $g$ is bounded) gives
\begin{equation}\label{eq-logderiv}
t\frac{d}{dt}\log\|G\|_p=\frac{\int_\T G^p\,\kappa(tg)\dd}{\int_\T G^p\dd}.
\end{equation}
The functions $G^{p-q}$ and $\kappa(tg)$ are, respectively, nondecreasing
and nonincreasing functions of $g$, so for all $\theta,\eta\in\T$,
\[
\bigl(G(\theta)^{p-q}-G(\eta)^{p-q}\bigr)
\bigl(\kappa(tg(\theta))-\kappa(tg(\eta))\bigr)\le0 .
\]
Multiplying by $G(\theta)^qG(\eta)^q\ge0$, integrating over
$\T\times\T$ and expanding the product, we obtain
\[
2\left(\int_\T G^p\kappa(tg)\dd\int_\T G^q\dd
-\int_\T G^p\dd\int_\T G^q\kappa(tg)\dd\right)\le0,
\]
that is,
\[
\frac{\int_\T G^p\kappa(tg)\dd}{\int_\T G^p\dd}
\le\frac{\int_\T G^q\kappa(tg)\dd}{\int_\T G^q\dd},
\]
the denominators being positive because $g$ is not almost everywhere zero.
By~\eqref{eq-logderiv} this says that
$t\frac{d}{dt}\log\left(\|G\|_p/\|G\|_q\right)\le0$. Since
$\arctan(tg)/t\to g$ uniformly as $t\downarrow0$, the limiting value of the
ratio is $\|g\|_p/\|g\|_q$.
\end{proof}

\begin{corollary}\label{cor-comparison}
For $p\ge4$ and $0\le s\le1$,
\begin{equation}\label{eq-comparison}
B_p(s)\le C_p\left(\frac{B_4(s)}{C_4}\right)^{p/4}.
\end{equation}
\end{corollary}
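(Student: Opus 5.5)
We apply Lemma~\ref{lem-ratio} with $g=|\cos\theta|$, $p\ge4$ and $q=4$, and then translate the conclusion back to $B_p$ and $B_4$ through the representation~\eqref{eq-Yform}.

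Fix $0<s<1$ and put $t=2\sqrt s/(1-s)\in(0,\infty)$. By~\eqref{eq-Yform}, $Y_s=\frac{2}{\pi\sqrt s}\arctan(tg)$. The prefactor $\frac{2}{\pi\sqrt s}$ cancels in any ratio of norms, so
\[
\frac{B_p(s)^{1/p}}{B_4(s)^{1/4}}=\frac{\|Y_s\|_p}{\|Y_s\|_4}=\frac{\|\arctan(tg)\|_p}{\|\arctan(tg)\|_4}.
\]
The function $g$ is bounded and not a.e.\ zero, and $p\ge4>0$. Lemma~\ref{lem-ratio} therefore bounds this ratio by $\|g\|_p/\|g\|_4$.

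By~\eqref{eq-Cp}, $C_p^{1/p}=(4/\pi)\|g\|_p$, so $\|g\|_p/\|g\|_4=C_p^{1/p}/C_4^{1/4}$, the factor $4/\pi$ cancelling. Combining the two steps gives
\[
B_p(s)^{1/p}\le C_p^{1/p}\bigl(B_4(s)/C_4\bigr)^{1/4}.
\]
Raising both sides to the power $p$ yields~\eqref{eq-comparison} for $0<s<1$.

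The endpoints follow from the continuity of $B_p$ and $B_4$ on $[0,1]$. Alternatively, they can be checked directly from~\eqref{eq-endpoints}. At $s=0$ we have equality $C_p=C_p\cdot1$. At $s=1$ the right side is $C_p/C_4^{p/4}=\bigl(\|g\|_p/\|g\|_4\bigr)^p\ge1=B_p(1)$, since $\|g\|_p\ge\|g\|_4$ on a probability space.

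There is no real obstacle here. The only point needing care is the bookkeeping of the normalizations $\frac{2}{\pi\sqrt s}$ and $\frac4\pi$, which must cancel correctly in the ratios.
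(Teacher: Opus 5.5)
Your proposal is correct and follows essentially the same argument as the paper: Lemma~\ref{lem-ratio} with $g=|\cos\theta|$, $q=4$, $t=2\sqrt s/(1-s)$, cancellation of the prefactor $2/(\pi\sqrt s)$, identification $\|g\|_p/\|g\|_4=C_p^{1/p}/C_4^{1/4}$, and raising to the power $p$. Your explicit check of the endpoints $s=0,1$ via~\eqref{eq-endpoints} is a small extra detail the paper leaves implicit.
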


\begin{proof}
Apply Lemma~\ref{lem-ratio} with $g=|\cos\theta|$, $q=4$ and
$t=2\sqrt s/(1-s)$. By~\eqref{eq-Yform} the factor $2/(\pi\sqrt s)$ cancels
in the ratio, so
\[
\frac{\|Y_s\|_p}{\|Y_s\|_4}\le\frac{\|\cos\theta\|_p}{\|\cos\theta\|_4}
=\frac{C_p^{1/p}}{C_4^{1/4}}
\]
by~\eqref{eq-Cp}. Raise to the power $p$.
\end{proof}

Thanks to~\eqref{eq-comparison}, we can obtain $B_p\le C_p$ from the simpler inequality $B_4\le C_4$ when the latter holds.  

\subsection{The fourth moment near the origin}\label{sec:quartic}

Let $L(z)=\arctanh z$ and
\[
h_m=\sum_{j=1}^m\frac1{2j-1},\qquad
t_m=\sum_{j=1}^m\frac{h_j}{j},\qquad h_0=t_0=0 .
\]
Since $L'(z)=(1-z^2)^{-1}$, we have $(L^2)'=2L/(1-z^2)$ and
$(L^3)'=3L^2/(1-z^2)$. Multiplication by
$(1-z^2)^{-1}=\sum_{k\ge0}z^{2k}$ replaces the coefficients of a power series
by their partial sums along even steps, so
\[
\frac{L(z)}{1-z^2}=\sum_{m\ge1}h_mz^{2m-1},
\qquad
\frac{L(z)^2}{1-z^2}=\sum_{m\ge1}t_mz^{2m},
\]
and integrating term by term yields
\begin{equation}\label{eq-LL}
L(z)^2=\sum_{m\ge1}\frac{h_m}{m}z^{2m},
\qquad
L(z)^3=\sum_{m\ge1}\frac{3t_m}{2m+1}z^{2m+1}.
\end{equation}

\begin{lemma}\label{lem-series} With $B_4$ as in~\eqref{eq-Bp} and $C_4=96/\pi^4$ we have, for all $0\le s<1$,
\begin{equation}\label{eq-series}
\frac{B_4(s)}{C_4}=\sum_{m\ge1}\left(\frac{h_m^2}{m^2}s^{2m-2}
-\frac{4t_m}{(2m+1)^2}s^{2m-1}\right).
\end{equation}
\end{lemma}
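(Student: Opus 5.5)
The plan is to compute $B_4(s)$ directly by expanding $|\Real F|^4$, where $F$ is the analytic function whose real part is $U$, and then to evaluate the resulting means by Parseval's identity using the coefficients recorded in \eqref{eq-LL}.

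Fix $0<s<1$, put $r=\sqrt s$, and let $F(\zeta)=\Phi(r\zeta)=\frac4\pi\arctan(r\zeta)$. This $F$ is holomorphic on a neighbourhood of $\overline\D$, satisfies $F(0)=0$, and has $\Real F=U(r\,\cdot)$. Since $U$ is real,
\[
U^4=\tfrac1{16}(F+\overline F)^4=\tfrac1{16}\bigl(F^4+\overline F^4+4F^3\overline F+4F\overline F^3+6|F^2|^2\bigr).
\]
We integrate this over $\T$ term by term.
\begin{itemize}
\item The terms $F^4$ and $\overline F^4$ have mean $F(0)^4=0$.
\item The mean of $F^3\overline F$ is the $L^2$ inner product $\langle F^3,F\rangle$.
\item The mean of $|F^2|^2$ is $\|F^2\|_2^2$.
\end{itemize}
Hence
\[
s^2B_4(s)=\frac1{16}\Bigl(6\|F^2\|_2^2+8\Real\langle F^3,F\rangle\Bigr).
\]
Both quantities are computed coefficientwise by Parseval. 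All the series involved converge absolutely, because the Taylor coefficients below grow at most polynomially while $r<1$.

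To get the coefficients, use $\arctanh(iz)=i\arctan z$, that is, $\arctan z=-iL(iz)$. Substituting into \eqref{eq-LL} gives
\[
\arctan^2 z=\sum_{m\ge1}(-1)^{m+1}\frac{h_m}{m}z^{2m},\qquad \arctan^3 z=\sum_{m\ge1}(-1)^{m+1}\frac{3t_m}{2m+1}z^{2m+1},
\]
and also $\arctan z=\sum_{m\ge0}(-1)^m z^{2m+1}/(2m+1)$. Replacing $z$ by $r\zeta$, Parseval yields
\[
\|F^2\|_2^2=\Bigl(\frac4\pi\Bigr)^4\sum_{m\ge1}\frac{h_m^2}{m^2}s^{2m}.
\]
Pairing the $z^{2m+1}$ coefficients of the cube and of the first power, the opposite signs $(-1)^{m+1}(-1)^m=-1$ give
\[
\langle F^3,F\rangle=-\Bigl(\frac4\pi\Bigr)^4\sum_{m\ge1}\frac{3t_m}{(2m+1)^2}s^{2m+1}.
\]
The $m=0$ term drops out because $t_0=0$.

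Substituting these into the expression for $s^2B_4(s)$ and dividing by $s^2$ gives
\[
B_4(s)=\frac{6}{16}\Bigl(\frac4\pi\Bigr)^4\sum_{m\ge1}\Bigl(\frac{h_m^2}{m^2}s^{2m-2}-\frac{4t_m}{(2m+1)^2}s^{2m-1}\Bigr).
\]
The prefactor is $\frac38\cdot\frac{256}{\pi^4}=\frac{96}{\pi^4}=C_4$, which proves \eqref{eq-series} for $0<s<1$.

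At $s=0$ the right-hand side of \eqref{eq-series} equals $h_1^2=1$, which agrees with $B_4(0)/C_4=1$ from \eqref{eq-endpoints}. The power series on the right has radius of convergence at least $1$, so it is continuous at $0$. Since $B_4$ is continuous on $[0,1]$, the identity holds at $s=0$ as well.

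The only delicate points are the sign bookkeeping when passing from $\arctanh$ to $\arctan$, and the observation that the non-real cross terms contribute only through the real inner product $\langle F^3,F\rangle$.
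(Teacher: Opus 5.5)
Your proof is correct and takes essentially the paper's route: expand the fourth power of the real (or imaginary) part of an analytic function vanishing at $0$, discard the $W^4$ and $\overline W^4$ terms, and evaluate $\|W^2\|_2^2$ and $\Real\langle W^3,W\rangle$ by Parseval using \eqref{eq-LL}. The differences are cosmetic. The paper rotates to $\Imag L(r\zeta)$, so its minus sign comes from $(W-\overline W)^4$, whereas you substitute $\arctan z=-iL(iz)$ into the series and get the sign from $(-1)^{m+1}(-1)^m=-1$; your continuity argument at $s=0$ spells out a point the paper leaves implicit.
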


\begin{proof}
From $\arctan z=-iL(iz)$ we get $\Real\arctan z=\Imag L(iz)$, hence
$U=(4/\pi)\Imag L(i\,\cdot)$ by~\eqref{eq-U}. Rotation invariance of $m$
gives $\int_\T U(r\zeta)^4\dd=(4/\pi)^4\int_\T(\Imag L(r\zeta))^4\dd$. For
analytic $W$ with $W(0)=0$ one has $\int_\T W^4\dd=0$, so expanding
$(\Imag W)^4=\frac1{16}(W-\overline W)^4$ leaves
\[
\int_\T(\Imag W)^4\dd=\frac38\int_\T|W|^4\dd
-\frac12\Real\int_\T W^3\overline W\dd .
\]
Apply this to $W=L(r\,\cdot)$ and use~\eqref{eq-LL} with Parseval's
identity:
\[
\int_\T|L(r\zeta)|^4\dd=\sum_{m\ge1}\frac{h_m^2}{m^2}r^{4m},
\qquad
\Real\int_\T L(r\zeta)^3\overline{L(r\zeta)}\dd
=3\sum_{m\ge1}\frac{t_m}{(2m+1)^2}r^{4m+2}.
\]
With $s=r^2$ and $C_4=\frac38 \cdot (4/\pi)^4$ this gives
\[ \frac{s^2B_4(s)}{C_4} = \sum_{m \ge 1}  \frac{h_m^2}{m^{2}} s^{2m}-4\sum_{m\ge 1} \frac{t_m}{(2m+1)^{2}}s^{2m+1},\] which
is~\eqref{eq-series}.
\end{proof}

Setting $s=0$ in~\eqref{eq-series} recovers $B_4(0)=C_4$, and letting
$s\to1$ recovers $B_4(1)=1$. The only consequence of~\eqref{eq-series} that
we need is the following.

\begin{lemma}\label{lem-left}
$B_4(s)\le C_4$ for $0\le s\le\frac78$.
\end{lemma}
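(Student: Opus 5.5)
The plan is to rewrite the series of Lemma~\ref{lem-series} as $1$ plus a sum of grouped terms, each of which is nonpositive for $s\le 7/8$. Write
\[
a_m=\frac{h_m^2}{m^2},\qquad b_m=\frac{4t_m}{(2m+1)^2},
\qquad\text{so that}\qquad \frac{B_4(s)}{C_4}=\sum_{m\ge1}\bigl(a_ms^{2m-2}-b_ms^{2m-1}\bigr).
\]
Since $a_1=h_1^2=1$, I would pair each negative term $-b_ms^{2m-1}$ with the next positive term $a_{m+1}s^{2m}$. This gives
\[
\frac{B_4(s)}{C_4}-1=\sum_{m\ge1}s^{2m-1}\bigl(a_{m+1}s-b_m\bigr).
\]
The regrouping is legitimate for $0\le s<1$, because $a_m,b_m=O(\log^2 m/m^2)$ and both series converge absolutely. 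It therefore suffices to prove
\[
b_m\ \ge\ \tfrac78\,a_{m+1}\qquad\text{for every } m\ge1, \tag{$*$}
\]
since then every bracket is $\le0$ when $0\le s\le 7/8$. The endpoint $s=7/8$ is covered by continuity, or directly, since $7/8<1$.

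The first cases are checked by hand. For $m=1$: $b_1=4/9$ and $a_2=(4/3)^2/4=4/9$, so the ratio is $1$. For $m=2$: $t_2=5/3$, so $b_2=4/15$, while $a_3=(23/15)^2/9\approx0.261$. I would tabulate a few more values of $m$ in the same way. The constant $7/8$ leaves slack, presumably because the ratio $b_m/a_{m+1}$ tends to $1$ while oscillating slightly.

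For general $m$, note that
\[
\frac{b_m}{a_{m+1}}=\frac{4(m+1)^2}{(2m+1)^2}\cdot\frac{t_m}{h_{m+1}^2},
\qquad \frac{4(m+1)^2}{(2m+1)^2}>1 .
\]
So it is enough to show $t_m\ge\frac78h_{m+1}^2$ for all $m$ beyond those checked by hand. I would get this from a telescoping comparison. On the one hand,
\[
h_j^2-h_{j-1}^2=\frac{h_j+h_{j-1}}{2j-1}\le\frac{2h_j}{2j-1},
\qquad\text{hence}\qquad \frac{h_j}{j}\ge\Bigl(1-\frac1{2j}\Bigr)\bigl(h_j^2-h_{j-1}^2\bigr).
\]
Summing over $j$ gives
\[
t_m\ \ge\ h_m^2-\sum_{j\le m}\frac{h_j^2-h_{j-1}^2}{2j}.
\]
The error sum is bounded by a convergent, explicitly estimable quantity, roughly $\sum_j h_j/(j(2j-1))$, where one uses $h_j\le 1+\frac12\log(2j-1)$. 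Separately, $h_{m+1}^2-h_m^2\le 2h_{m+1}/(2m+1)$ is small. Combining these gives $t_m\ge h_{m+1}^2-K$ for an explicit constant $K$. Since $h_{m+1}^2$ grows like $\frac14\log^2m$, this yields $t_m\ge\frac78h_{m+1}^2$ once $h_{m+1}^2\ge 8K$.

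The main obstacle is the bookkeeping between the asymptotic regime and the finite range. The constant $K$ may be of moderate size, while $h_m$ grows only logarithmically, so the threshold past which the asymptotic bound works could be large. If that happens, I would sharpen the lower bound by keeping the factor $\frac{4(m+1)^2}{(2m+1)^2}=1+\frac{1}{2m+1}+O(m^{-2})$ rather than discarding it, and by computing the first few error terms of the sum exactly. If necessary, I would also reduce the finite check to an explicit computation of $h_m$ and $t_m$ up to that threshold.
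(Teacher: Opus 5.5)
Your plan would work, and your regrouping and sufficient condition $(*)$ are exactly the paper's. The two arguments differ only in how $(*)$ is proved for all $m$, and yours is much heavier. The paper keeps the factor $\frac{4(m+1)^2}{(2m+1)^2}$. It uses the identity $\frac{2m+1}{2m+2}h_{m+1}=h_m+\frac{1-h_m}{2m+2}\le h_m$, so the factor absorbs exactly the shift from $h_{m+1}$ to $h_m$, and $(*)$ follows from $t_m\ge\frac78h_m^2$. It then compares \emph{increments} rather than totals: $E_m=t_m-\frac78h_m^2$ satisfies $E_m-E_{m-1}=\frac{(m-4)h_m}{4m(2m-1)}+\frac{7}{8(2m-1)^2}\ge0$ for $m\ge4$. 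So only $E_1,E_2,E_3>0$ are checked by hand, and no infinite constant is ever estimated. Your own inequality $h_j^2-h_{j-1}^2\le\frac{2h_j}{2j-1}$ already suffices for this step. It gives $E_m-E_{m-1}\ge h_m\bigl(\frac1m-\frac{7}{4(2m-1)}\bigr)=\frac{(m-4)h_m}{4m(2m-1)}$, because the slack $\frac78$ beats the factor $\frac{2m}{2m-1}$ once $m\ge4$.

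Here is what the additive version costs. Discarding the factor makes your target $t_m\ge\frac78h_{m+1}^2$ false for every $m\le7$ (e.g.\ $t_7\approx3.547<\frac78h_8^2\approx3.577$), so those cases need $(*)$ checked directly. More seriously, $K$ is sizable. The error sum is $\sum_j\frac{h_j^2-h_{j-1}^2}{2j}=\sum_j\frac{h_j^2}{2j(j+1)}\approx1.1$, and about $1.75$ via the bound $h_j\le1+\frac12\log(2j-1)$ you propose. Since $h_m\approx\frac12\log m+0.98$, the condition $h_{m+1}^2\ge8K$ only takes effect around $m\approx60$, or $m\approx250$ with the cruder bound. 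Your ``finite check'' therefore becomes a machine verification over that range. Retaining the factor does not help, since it only compensates $h_{m+1}\to h_m$. (Incidentally, its expansion is $1+\frac{2}{2m+1}+\frac{1}{(2m+1)^2}$, not $1+\frac1{2m+1}+O(m^{-2})$.) The root problem is that the true gap $h_m^2-t_m$ increases to a positive limit of about $0.41$, which your estimate inflates further. Meanwhile the available slack $\frac18h_m^2$ grows only like $\frac1{32}\log^2m$.
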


\begin{proof}
The term $m=1$ of the first sum in~\eqref{eq-series} equals $h_1^2=1$.
Pairing each negative term with the positive term following it gives
\begin{equation}\label{eq-regroup}
\frac{B_4(s)}{C_4}-1=\sum_{m\ge1}s^{2m-1}
\left[\frac{h_{m+1}^2}{(m+1)^2}\,s-\frac{4t_m}{(2m+1)^2}\right].
\end{equation}
We claim that every bracket in~\eqref{eq-regroup} is nonpositive when $s\le 7/8$. Since 
\[
\frac{2m+1}{2m+2}h_{m+1}
 =h_m+\frac{1-h_m}{2m+2}\le h_m
\]
it suffices to prove
\begin{equation}\label{eq-tm}
\frac{7}{8} h_m^2 \le t_m, \quad m\ge1.
\end{equation}
Put $E_m=t_m-\frac78h_m^2$. The defining recurrences give
\[
E_m-E_{m-1}
=\frac{(m-4)h_m}{4m(2m-1)}+\frac7{8(2m-1)^2}\ge0,
\qquad m\ge4.
\]
Since $E_1=\frac18$, $E_2=\frac19$ and $E_3=\frac{217}{1800}$ are
positive,~\eqref{eq-tm} follows.  
\end{proof}

\subsection{The second moment near the boundary}\label{sec:boundary}

As $s\to1$ the comparison~\eqref{eq-comparison} becomes useless, since its
right side tends to $C_pC_4^{-p/4}>1$. Near the boundary we interpolate
between $L^2$ and $L^\infty$ instead, where both endpoints are explicit.

\begin{lemma}\label{lem-B2}
For $0\le s\le1$,
\[
B_2(s)=\frac8{\pi^2}\sum_{k\ge0}\frac{s^{2k}}{(2k+1)^2}.
\]
In particular $B_2$ has nonnegative Taylor coefficients, so it is increasing
and convex, with
$B_2(0)=C_2=8/\pi^2$ and $B_2(1)=1$.
\end{lemma}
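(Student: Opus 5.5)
We compute $B_2(s)$ by Parseval's identity, using the Fourier expansion of $U$. Since $U=P[\sgn\cos\theta]$, we first expand $\sgn\cos\theta$. Its Fourier series is
\[
\sgn\cos\theta=\frac4\pi\sum_{k\ge0}\frac{(-1)^k}{2k+1}\cos\bigl((2k+1)\theta\bigr).
\]
Equivalently, one can read it off from $U=\Real\Phi$ with $\Phi(z)=\frac4\pi\arctan z=\frac4\pi\sum_{k\ge0}\frac{(-1)^kz^{2k+1}}{2k+1}$. Applying the Poisson operator multiplies the $n$-th coefficient by $r^{|n|}$, so
\[
U(re^{i\theta})=\frac4\pi\sum_{k\ge0}\frac{(-1)^k r^{2k+1}}{2k+1}\cos\bigl((2k+1)\theta\bigr).
\]

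Each cosine term contributes $\tfrac12$ times its squared coefficient to the mean square, because $\int_\T\cos^2(n\theta)\dd=\frac12$. Parseval's identity with respect to the probability measure $dm$ therefore gives
\[
\int_\T U(re^{i\theta})^2\dd=\frac{16}{\pi^2}\cdot\frac12\sum_{k\ge0}\frac{r^{4k+2}}{(2k+1)^2}.
\]
By~\eqref{eq-Bp} we have $B_2(s)=s^{-1}\int_\T U(\sqrt s\,e^{i\theta})^2\dd$. Setting $r=\sqrt s$, the factor $r^{4k+2}=s^{2k+1}$ becomes $s^{2k}$ after dividing by $s$. This yields the stated formula for $0\le s<1$.

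The series converges uniformly on $[0,1]$, so its sum is continuous there. By~\eqref{eq-endpoints}, $B_2$ extends continuously to $s=1$, hence the formula also holds at $s=1$.

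The consequences follow directly from the formula.
\begin{itemize}
\item Positive coefficients and powers $s^{2k}$ make $B_2$ increasing and convex on $[0,1]$.
\item At $s=0$ only the $k=0$ term survives, giving $B_2(0)=8/\pi^2$. This agrees with $C_2=(4/\pi)^2\cdot\frac12$.
\item At $s=1$ the sum is $\sum_{k\ge0}(2k+1)^{-2}=\pi^2/8$, so $B_2(1)=1$. This is consistent with~\eqref{eq-endpoints}.
\end{itemize}

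The only step needing care is the continuity at $s=1$, and it is handled by the uniform convergence above together with~\eqref{eq-endpoints}. The rest is a routine calculation.
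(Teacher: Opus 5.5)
Your proposal is correct and is the same argument the paper gives: take the odd-frequency Fourier series of $\sgn\cos\theta$, multiply the coefficients by $r^{|n|}$, apply Parseval, and divide by $s=r^2$. You also justify the endpoint values by uniform convergence of the series on $[0,1]$ together with the continuous extension \eqref{eq-endpoints}, which the paper leaves implicit. The same continuity argument covers $s=0$, where dividing by $s$ is likewise only formal.
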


\begin{proof}
The Fourier coefficients of $\sgn\cos\theta$ are $2(-1)^k/(\pi(2k+1))$ at the
frequencies $\pm(2k+1)$ and zero otherwise, so Parseval's identity gives
$\int_\T U(r\zeta)^2\dd=\frac8{\pi^2}\sum_{k\ge0}r^{2(2k+1)}(2k+1)^{-2}$.
Divide by $s=r^2$ to obtain $B_2(s)$. 
\end{proof}

\begin{lemma}\label{lem-right} With $\phi$ as in~\eqref{eq-sup}, for $\frac78\le s\le1$ we have $\phi(s)^3B_2(s)\le1$, hence $B_{p_{\mathbb R}}(s)\le1$.
\end{lemma}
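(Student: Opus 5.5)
The plan has two parts. First, reduce the statement about $B_{p_{\mathbb R}}$ to the inequality $\phi^3B_2\le1$ by interpolating between $L^2$ and $L^\infty$. Second, prove that inequality on $[\frac78,1]$, using that both factors are monotone with explicit formulas.

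\textbf{The reduction.} Since $0\le Y_s\le\|Y_s\|_\infty=\phi(s)$, for every $p\ge2$ we have
\[
B_p(s)=\int_\T Y_s^{p-2}Y_s^2\dd\le\phi(s)^{p-2}B_2(s).
\]
By \eqref{eq-sup}, $\phi$ decreases from $4/\pi$ to $1$, so $\phi(s)\ge1$ on $[0,1]$. Because $2\le p_{\mathbb R}-2\cdot 0$ and $p_{\mathbb R}-2<3$ (as $p_{\mathbb R}<5$), this gives $\phi^{p_{\mathbb R}-2}\le\phi^3$. Hence $B_{p_{\mathbb R}}(s)\le\phi(s)^3B_2(s)$, and it suffices to prove $\Psi(s):=\phi(s)^3B_2(s)\le1$ for $\frac78\le s\le1$. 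Note that $\Psi(1)=1$, so the inequality is sharp at the right endpoint.

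\textbf{Away from $s=1$: a monotone grid.} Here $\phi$ is decreasing (from the integral formula in \eqref{eq-sup}) and $B_2$ is increasing (Lemma~\ref{lem-B2}). So on any subinterval $[a,b]$,
\[
\Psi(s)\le\phi(a)^3B_2(b).
\]
I would cover $[\frac78,1-\delta]$ by finitely many such intervals and check $\phi(a)^3B_2(b)\le1$ on each. Both quantities are explicit. For $B_2(b)$ I would use a rigorous upper bound: a partial sum of the series in Lemma~\ref{lem-B2} plus a geometric tail estimate $\sum_{k>K}b^{2k}/(2k+1)^2\le b^{2K+2}/((2K+3)^2(1-b^2))$.

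\textbf{Near $s=1$: a derivative argument.} The grid cannot close the gap, since $\Psi(1)=1$ exactly. On $[1-\delta,1]$ I would instead show that $\Psi$ is increasing, so that $\Psi\le\Psi(1)=1$. This requires
\[
\frac{B_2'(s)}{B_2(s)}\ge3\,\frac{|\phi'(s)|}{\phi(s)}.
\]
The right side is bounded: explicitly,
\[
\phi'(s)=\frac4\pi\left[\frac1{2s(1+s)}-\frac{\arctan\sqrt s}{2s^{3/2}}\right],
\]
so that $3|\phi'(1)|/\phi(1)=3(\tfrac12-\tfrac1\pi)\approx0.545$. The left side is large, because
\[
B_2'(s)=\frac8{\pi^2}\sum_k\frac{2k\,s^{2k-1}}{(2k+1)^2}
\]
diverges logarithmically as $s\to1$. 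Using $B_2\le1$, it suffices to have
\[
B_2'(s)\ge3\,\frac{|\phi'(s)|}{\phi(s)}.
\]
I would bound $B_2'$ from below by a partial sum, evaluated at the left endpoint $1-\delta$ (each term is increasing in $s$). I would bound $|\phi'|/\phi$ from above by a crude estimate valid on the whole interval.

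\textbf{Main obstacle.} The hardest part is making the two regimes meet. The grid near $s=1$ requires $\Psi$ to stay visibly below $1$, while the derivative bound requires $\delta$ large enough for the partial sum of $B_2'$ to exceed about $0.55$–$0.6$. If the derivative inequality holds on all of $[\frac78,1]$, the grid step becomes unnecessary. I would check this first, since near $s=7/8$ the partial sums of $B_2'$ are already around $0.5$–$0.7$, which is borderline, and the grid serves as the fallback.
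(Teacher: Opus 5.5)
Your proposal is correct, and it takes a genuinely different route from the paper. Your reduction $B_{p_{\mathbb R}}\le\phi^{p_{\mathbb R}-2}B_2\le\phi^3B_2$ is the paper's; all it needs is $0\le p_{\mathbb R}-2\le3$ and $\phi\ge1$.

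\textbf{How the paper proves $\Psi\le1$.} For $\Psi=\phi^3B_2\le1$ the paper uses two linear bounds in $u=1-s$, valid on the whole interval:
\begin{itemize}
\item $\phi(s)-1\le\frac5{24}u$. This comes from $\phi(s)-1=\frac{4u}{\pi}\int_0^1\frac{x^2\,dx}{(1+sx^2)(1+x^2)}$ and the chord bound $(1+sx^2)^{-1}\le1-\frac7{15}x^2$.
\item $1-B_2(s)\ge\frac58u$. This comes from convexity of $B_2$ (the chord from $\frac78$ to $1$) and the single numerical input $B_2(\frac78)<\frac{59}{64}$.
\end{itemize}
Then $\log\Psi\le3(\phi-1)-(1-B_2)\le0$. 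Convexity lets the \emph{average} slope of $B_2$ over $[\frac78,1]$ compete with $3\sup|\phi'|$, so neither monotonicity of $\Psi$ nor a grid is needed.

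\textbf{Your scheme.} It is also sound. Both monotonicity facts are correct, and $\Psi(1)=1$ does force a non-grid argument near $s=1$. The logarithmic blow-up of $B_2'$ does make $\Psi'\ge0$ there. It is simply more laborious: it needs several rigorous evaluations of $\arctan$ and of the series for $B_2$ and $B_2'$.

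\textbf{Your numerics need correcting.} Proving $\Psi'\ge0$ on all of $[\frac78,1]$ is impossible.
\begin{itemize}
\item Differentiating $sB_2(s)=\frac8{\pi^2}\sum_{k\ge0}\frac{s^{2k+1}}{(2k+1)^2}$ gives $B_2'(s)=\frac{8\arctanh s}{\pi^2s^2}-\frac{B_2(s)}{s}$.
\item At $s=\frac78$, where $\arctanh s=\frac12\log15$, this is about $0.39$, not $0.5$--$0.7$. Meanwhile $3|\phi'(\frac78)|/\phi(\frac78)\approx0.577$.
\item So $\Psi$ decreases from $\Psi(\frac78)\approx0.980$ to a minimum of about $0.976$ near $s\approx0.925$, and only then rises to $1$.
\item Your simplified condition $B_2'\ge3|\phi'|/\phi$ holds only for $s\gtrsim0.935$.
\end{itemize}

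\textbf{What a working version looks like.} The grid is essential, and the handoff should be near $0.95$.
\begin{itemize}
\item The breakpoints $\frac78,0.9,0.93,0.95$ give $\phi(a)^3B_2(b)\approx0.991,\ 0.993,\ 0.988$.
\item On $[0.95,1]$, $\phi$ is convex and $\phi\ge1$, so $3|\phi'|/\phi\le3|\phi'(0.95)|\approx0.563$.
\item It then suffices that a partial sum of $B_2'(0.95)$ exceed $0.563$; the first ten terms already do.
\end{itemize}
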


\begin{proof}
Put $a=\frac78$ and $u=1-s\in[0,\frac18]$. Since $\phi(1)=1$,
\eqref{eq-sup} gives
\[
\phi(s)-1=\frac{4u}\pi\int_0^1
\frac{x^2\,dx}{(1+sx^2)(1+x^2)}.
\]
For $s\ge a$, convexity of $y\mapsto(1+ay)^{-1}$ gives
\[
\frac1{1+sx^2}\le\frac1{1+ax^2}\le1-\frac7{15}x^2,
\qquad 0\le x\le1.
\]
Using $\int_0^1x^2/(1+x^2)\,dx=1-\pi/4$ and
$\int_0^1x^4/(1+x^2)\,dx=\pi/4-2/3$, we obtain
\begin{equation}\label{eq-phibound}
\phi(s)-1\le\left(\frac{236}{45\pi}-\frac{22}{15}\right)u
\le\frac5{24}u,
\end{equation}
where the last inequality follows from $\pi>157/50$.

Four terms of the series in Lemma~\ref{lem-B2}, followed by a geometric
bound for the tail, give
\begin{equation}\label{eq-B2value}
B_2(a)\le\frac8{\pi^2}\left[
\sum_{k=0}^3\frac{a^{2k}}{(2k+1)^2}
+\frac{a^8}{81(1-a^2)}\right]
<\frac8{\pi^2}\cdot\frac{142}{125}
<\frac{59}{64}.
\end{equation}
Convexity of $B_2$ and $B_2(1)=1$ now imply
\begin{equation}\label{eq-B2bound}
1-B_2(s)\ge\frac{1-s}{1-a}\bigl(1-B_2(a)\bigr)\ge\frac58u.
\end{equation}
Consequently, $\log x\le x-1$ yields the exact cancellation
\[
\log\bigl(\phi(s)^3B_2(s)\bigr)
\le3(\phi(s)-1)-(1-B_2(s))
\le\frac58u-\frac58u=0.
\]
Finally, since $\phi(s)\ge1$ and $4<p_{\mathbb R}<5$,
\[
B_{p_{\mathbb R}}(s)
\le\phi(s)^{p_{\mathbb R}-2}B_2(s)
\le\phi(s)^3B_2(s)\le1.
\qedhere \]
\end{proof}

\begin{proof}[Proof of Theorem~\ref{thm-real}]
By Corollary~\ref{cor-reduction} it suffices to prove $B_{p_{\mathbb R}}\le1$ on
$[0,1]$. For $0\le s\le\frac78$, Lemma~\ref{lem-left} gives $B_4(s)\le C_4$,
so the comparison~\eqref{eq-comparison} with $p=p_{\mathbb R}$, together with
$C_{p_{\mathbb R}}=1$, yields
\[
B_{p_{\mathbb R}}(s)\le C_{p_{\mathbb R}}\left(\frac{B_4(s)}{C_4}\right)^{p_{\mathbb R}/4}\le1 .
\]
For $\frac78\le s\le1$ this is Lemma~\ref{lem-right}. Hence
$\|P_rf\|_{p_{\mathbb R}}\le r\|f\|_\infty$ for real $f$ of mean zero, and the same
holds for $0<p\le p_{\mathbb R}$ because $L^p$ norms increase with $p$ on a probability
space.

The failure for $p>p_{\mathbb R}$ is part of Corollary~\ref{cor-reduction}.
\end{proof}

\section{Real functions with nonzero mean}\label{sec:centered}

Theorem~\ref{thm-real} requires $\widehat f(0)=0$. For arbitrary real $f$
with $\|f\|_\infty\le1$ one may of course apply it to $f-\widehat f(0)$, but
$\|f-\widehat f(0)\|_\infty$ will be in general greater than $1$. Theorem~\ref{thm-centered} requires a different argument, which goes as follows: the Poisson kernel is a positive mixture of normalized arc kernels; each arc average admits an $L^3$ estimate by the first
Fourier coefficient of its kernel; and the first Fourier coefficient of $K_r$ is exactly $r$. 

For $0<t\le\pi$ set
\[
\chi_t=\frac\pi t\mathbf 1_{\{|\theta|<t\}},
\qquad
A_tf=\chi_t*f,\quad\text{that is}\quad
A_tf(\theta)=\frac1{2t}\int_{-t}^tf(\theta+\sigma)\,d\sigma,
\]
the convolution being normalized as in~\eqref{eq-poisson}. Then
$\int_\T\chi_t\dd=1$. Using the sinc function $\sinc t = (\sin t)/t$ we can write $\widehat{A_tf}(n)=\sinc(nt)\widehat f(n)$;
in particular $\widehat{\chi_t}(\pm1)=\sinc t$.

The arc estimate follows from a sharp bound for periodic Lipschitz functions.

\begin{lemma}\label{lem-lipschitz}
Let $0<t\le\pi/2$, and let $h$ be a real-valued $2\pi$-periodic Lipschitz
function with $|h|\le1$ and $|h'|\le1/t$ almost everywhere. Then 
\begin{equation}\label{eq-lipschitz}
\|h-\widehat h(0)\|_3^3\le1-\frac{3t}{2\pi}.
\end{equation}
\end{lemma}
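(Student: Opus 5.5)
The plan is to reduce the lemma to a finite-dimensional extremal problem for the distribution of $h$, using the coarea formula and a convexity argument. The extremal configuration should be the trapezoid $h=\pm1$ with two linear transitions of slope $\pm1/t$. Each transition has length $2t$, and on it $h$ is uniform on $[-1,1]$, so $|h|^3$ has average $1/4$ there. This gives $\|h\|_3^3=1-\tfrac34\cdot\tfrac{4t}{2\pi}=1-\tfrac{3t}{2\pi}$ and shows that \eqref{eq-lipschitz} is sharp. The assumption $t\le\pi/2$ is exactly what makes this configuration fit on $\T$: the two transitions have total length $4t\le2\pi$.

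\emph{Step 1 (distributional lower bound).} Let $m=\min h$ and $M=\max h$, so $-1\le m\le M\le1$, and let $\mu$ be the distribution of $h$ under $dm$. By periodicity, every level $y\in(m,M)$ is attained at least twice. The coarea formula together with $|h'|\le1/t$ then gives
\[
m\{a<h<b\}=\frac1{2\pi}\int_a^b\sum_{h(\theta)=y}\frac{1}{|h'(\theta)|}\,dy\ge\frac{t}{\pi}(b-a)
\]
for $m\le a<b\le M$. Hence $\mu\ge\frac t\pi\lambda|_{[m,M]}$, where $\lambda$ is Lebesgue measure. Write $\mu=\frac t\pi\lambda|_{[m,M]}+\nu$ with $\nu\ge0$ supported on $[m,M]$ and of total mass $1-\frac t\pi(M-m)$. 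Let $c=\int y\,d\mu$ be the mean, so that $\widehat h(0)=c$ and the quantity to bound is $\int|y-c|^3\,d\mu$.

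\emph{Step 2 (push the free mass to the endpoints).} Hold $c$ fixed and replace $\nu$ by the measure $\alpha\delta_m+\beta\delta_M$ with the same mass and the same first moment. This is a mean-preserving spread of $\nu$, so it does not change $c$. Since $y\mapsto|y-c|^3$ is convex, the spread does not decrease $\int|y-c|^3\,d\mu$. Bounding this integral therefore reduces to an explicit function of the three parameters $m$, $M$ and $\alpha$, where
\[
\beta=1-\tfrac t\pi(M-m)-\alpha
\]
and $c$ is determined by the first-moment condition.

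\emph{Step 3 (finite-dimensional maximization).} This step is the main obstacle. The quantity to be maximized is
\[
F=\frac t\pi\int_m^M|y-c|^3\,dy+\alpha|m-c|^3+\beta|M-c|^3,
\]
and the coupling between $c$ and the weights $\alpha,\beta$ makes the maximization delicate. The goal is to show $F\le1-\frac{3t}{2\pi}$, with the maximum at $m=-1$, $M=1$, $\alpha=\beta$. I would proceed in three moves.
\begin{itemize}
\item First reduce to the full range. Enlarging $[m,M]$ to $[-1,1]$ while transferring the corresponding endpoint masses outward acts as a mean-preserving spread, so it should not decrease $F$. If this resists, an affine rescaling argument comparing $M-m$ with $2$ could serve instead.
\item On $[-1,1]$, parametrize $\alpha=\frac{1-2t/\pi}{2}+\delta$ and $\beta=\frac{1-2t/\pi}{2}-\delta$. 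The mean is then $c=2\delta$, and $F(\delta)$ becomes an explicit polynomial.
\item Show that $F(\delta)\le F(0)$, either by checking that $F$ is even with $F''\le0$, or directly by comparing the two sides as polynomials in $\delta$ on the admissible range $|\delta|\le\frac12-\frac t\pi$.
\end{itemize}
Since $F(0)=1-\frac{3t}{2\pi}$ by the computation above, this would complete the proof.
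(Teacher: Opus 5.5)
Your Steps 1 and 2 are sound and match the paper's Steps 1 and 2. The paper gets the uniform lower bound $\int\psi(h)\,dm\ge\frac t\pi\int\psi$ from the fundamental theorem of calculus on the two arcs joining a maximum point to a minimum point, which avoids the coarea formula. In your display the equality should be $\ge$, because of the set $\{h'=0\}$. Pushing $\nu$ to the endpoints is exactly the paper's chord estimate with the affine interpolant $\ell$ of $|x-c|^3$. On the full range your $F$ is explicit. The mean is $c=\beta-\alpha=-2\delta$, not $2\delta$, and
$F=1-\frac{3t}{2\pi}-\frac{3t}{\pi}c^2-\bigl(1-\frac{t}{2\pi}\bigr)c^4$. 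So your second and third moves are immediate rather than the main obstacle.

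The genuine gap is the first move of Step 3. A full-range measure $\frac t\pi\lambda|_{[-1,1]}+\alpha'\delta_{-1}+\beta'\delta_1$ has $\alpha'+\beta'=1-\frac{2t}{\pi}$ and mean $\beta'-\alpha'$. A mean-preserving spread onto such a measure therefore exists only when $|c|\le 1-\frac{2t}{\pi}$; in that regime it does work. If $h$ takes values in $[1-\eta,1]$ with $\eta$ small, then $c$ is close to $1$, no admissible $\alpha',\beta'\ge0$ exist, and the move is impossible.

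The affine rescaling you mention as a fallback is the right fix, and it is what the paper does, but it carries real content.
\begin{itemize}
\item Put $c_0=(m+M)/2$, $b=(M-m)/2\le1$ and $g=(h-c_0)/b$. Then $g$ has range exactly $[-1,1]$ and $|g'|\le1/(bt)$.
\item Your full-range computation, with $t$ replaced by $bt$, gives $\|h-\widehat h(0)\|_3^3\le b^3\bigl(1-\frac{3bt}{2\pi}\bigr)$.
\item You still need $b^3-\frac{3t}{2\pi}b^4\le1-\frac{3t}{2\pi}$ for $0\le b\le1$. This holds because the derivative $3b^2\bigl(1-\frac{2bt}{\pi}\bigr)$ is nonnegative on $[0,1]$ exactly when $t\le\pi/2$.
\end{itemize}

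This is where the hypothesis $t\le\pi/2$ is actually used, not merely to make the trapezoid fit on $\T$. For $t>\pi/2$, take the triangle wave with amplitude $b=\pi/(2t)<1$ and slopes $\pm1/t$. Its two transitions have total length $2\pi$ and it is uniformly distributed on $[-b,b]$. It gives $\|h\|_3^3=b^3/4=\pi^3/(32t^3)>1-\frac{3t}{2\pi}$, so the lemma is false there. Any reduction from a short range to the full range must therefore use $t\le\pi/2$ quantitatively, as the monotonicity in $b$ does.
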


\begin{proof}
Put $a=\widehat h(0)$.
Let $[c-b,c+b]$ with $0\le b\le1$ be the range of $h$. If $b=0$ then
$h\equiv a$ and there is nothing to prove, so assume $b>0$ and put
\[
g=\frac{h-c}{b},\qquad\mu=\int_\T g\dd,
\]
so that $g$ takes values in $[-1,1]$, attains both endpoints,
$|g'|\le(bt)^{-1}$, and $h-a=b(g-\mu)$.

\textit{Step 1: a lower bound for integrals of $\psi(g)$.} We claim that
for every continuous $\psi\ge0$ on $[-1,1]$,
\begin{equation}\label{eq-uniform-component}
\int_\T\psi(g)\dd \ge \frac{bt}\pi \int_{-1}^1\psi(x)\,dx.
\end{equation}
Indeed, choose $\theta_-,\theta_+\in\T$ with $g(\theta_\pm)=\pm1$. These
points are distinct, so they split $\T$ into two closed arcs $I_1,I_2$ with
disjoint interiors, each having $\theta_-$ as one endpoint and $\theta_+$ as
the other. (The function $g$ need not be monotone on $I_j$, and it may attain
$\pm1$ at other points as well; only its values at the endpoints of $I_j$
will matter.) If $\Psi'=\psi$ then $\Psi\circ g$ is Lipschitz with
$(\Psi\circ g)'=\psi(g)g'$ almost everywhere, so by the fundamental theorem
of calculus on $I_j$,
\[
\int_{I_j}\psi(g)|g'|\,d\theta
\ge\left|\int_{I_j}\psi(g)g'\,d\theta\right|
=|\Psi(1)-\Psi(-1)|=\int_{-1}^1\psi(x)\,dx .
\]
Since $|g'|\le(bt)^{-1}$, each arc contributes
$\int_{I_j}\psi(g)\,d\theta\ge bt\int_{-1}^1\psi$; adding the two and
dividing by $2\pi$ gives~\eqref{eq-uniform-component}.

\textit{Step 2: the chord estimate.} Let $\varphi(x)=|x-\mu|^3$ and let
\[
\ell(x)=\frac{1-x}2(1+\mu)^3+\frac{1+x}2(1-\mu)^3
\]
be the affine function agreeing with $\varphi$ at $\pm1$. By convexity
$\ell-\varphi\ge0$ on $[-1,1]$, and since $\ell$ is affine,
$\int_\T\ell(g)\dd=\ell(\mu)=1-\mu^4$. Applying~\eqref{eq-uniform-component}
to $\psi=\ell-\varphi$ gives
\[
\int_\T|g-\mu|^3\dd=\ell(\mu)-\int_\T(\ell-\varphi)(g)\dd
\le 1-\mu^4- \frac{bt}\pi\int_{-1}^1(\ell-\varphi)\,dx .
\]
Here $\int_{-1}^1\ell\,dx=2+6\mu^2$ and
$\int_{-1}^1\varphi\,dx=(1+6\mu^2+\mu^4)/2$, so
\[
\int_\T|g-\mu|^3\dd\le1- \frac{3bt}{2\pi} - \frac{3bt}\pi \mu^2-\left(1-\frac{bt}{2\pi}\right)\mu^4
\le1-\frac{3bt}{2\pi}.
\]

\textit{Step 3: conclusion.} Since $h-a=b(g-\mu)$,
\[
\|h-a\|_3^3\le b^3\left(1-\frac{3bt}{2\pi}\right)\le1-\frac{3t}{2\pi},
\]
the last step because $b\mapsto b^3-\frac{3t}{2\pi}b^4$ has derivative
$3b^2\left(1- 2tb/\pi\right)\ge0$ for $b\in[0,1]$ when $t\le\pi/2$.
\end{proof}

\begin{lemma}\label{lem-arc}
For $f\in L^\infty(\T; \R)$ and $0<t\le\pi$ we have 
\begin{equation}\label{eq-arc}
\|A_tf- \widehat f(0) \|_3\le\sinc t\,\|f\|_\infty .
\end{equation}
\end{lemma}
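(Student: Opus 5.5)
Normalize $\|f\|_\infty=1$ and set $h=A_tf$ and $a=\widehat f(0)=\widehat h(0)$. The plan has two regimes. For $0<t\le\pi/2$ we apply Lemma~\ref{lem-lipschitz} directly and then compare with $\sinc t$ by an elementary inequality. For $\pi/2<t\le\pi$ we reduce to the first regime by a complementary-arc identity.

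\emph{Case $0<t\le\pi/2$.} The function $h(\theta)=\frac1{2t}\int_{\theta-t}^{\theta+t}f$ is $2\pi$-periodic and real, with $|h|\le1$. It is Lipschitz, with $h'(\theta)=\bigl(f(\theta+t)-f(\theta-t)\bigr)/(2t)$ almost everywhere, so $|h'|\le1/t$. Lemma~\ref{lem-lipschitz} then gives $\|h-a\|_3^3\le1-\frac{3t}{2\pi}$. It remains to verify the scalar inequality
\[
1-\frac{3t}{2\pi}\le\left(\frac{\sin t}{t}\right)^3,\qquad 0<t\le\frac\pi2 .
\]
This is tight-ish at $t=\pi/2$, where the two sides are $1/4$ and $8/\pi^3\approx0.258$. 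For small $t$ the left side is $1-0.477t$, while the right side is $1-t^2/2+O(t^4)$, so the inequality also holds there.

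I expect this scalar check to be the only real obstacle. Cruder bounds do not suffice: $\sin t\ge t-t^3/6$ is too weak at $\pi/2$. I would instead use the Taylor lower bound
\[
\sin t\ge t-\frac{t^3}{6}+\frac{t^5}{120}-\frac{t^7}{5040},
\]
which at $\pi/2$ gives about $0.9998$, comfortably above the required $(\pi/2)4^{-1/3}\approx0.9895$. This reduces the claim to a polynomial inequality in $t$ on $[0,\pi/2]$. I would handle that polynomial inequality by splitting the interval into a few subintervals and using monotonicity on each, or by checking that the relevant difference is concave and positive at the endpoints.

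\emph{Case $\pi/2<t\le\pi$.} For $t=\pi$, $A_\pi f=a$ and $\sinc\pi=0$, so the claim is trivial; assume $t<\pi$. The key identity is
\[
t\chi_t+(\pi-t)\,\widetilde\chi=\pi \quad\text{on } \T,
\]
where $\widetilde\chi=\frac{\pi}{\pi-t}\mathbf 1_{\{t<|\theta|\le\pi\}}$ is the normalized indicator of the complementary arc. That arc has half-length $\pi-t$ and is centred at $\pi$, so $\widetilde\chi$ is the translate of $\chi_{\pi-t}$ by $\pi$. Convolving with $f$ gives
\[
A_tf-a=-\frac{\pi-t}{t}\bigl(A_{\pi-t}f-a\bigr)(\cdot+\pi).
\]
Translation preserves $L^3$ norms, and $0<\pi-t<\pi/2$, so the first case yields
\[
\|A_tf-a\|_3\le\frac{\pi-t}{t}\,\sinc(\pi-t)=\frac{\sin(\pi-t)}{t}=\frac{\sin t}{t}=\sinc t .
\]
This completes the plan. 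All the difficulty sits in the calculus inequality of the first case.
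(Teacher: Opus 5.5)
Your reduction is the paper's proof. The Lipschitz bound $|h'|\le1/t$ with Lemma~\ref{lem-lipschitz} for $0<t\le\pi/2$, the trivial case $t=\pi$, and the complementary-arc identity $t(A_tf-a)=-(\pi-t)\bigl(A_{\pi-t}f(\cdot+\pi)-a\bigr)$ for $\pi/2<t<\pi$ all match it and are correct.

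\textbf{The gap: the scalar inequality.} What is not yet a proof is the step you yourself flag as the crux: $1-\frac{3t}{2\pi}\le(\sinc t)^3$ on $(0,\pi/2]$. One of your two proposed closings fails. Put $q(t)=1-\frac{t^2}{6}+\frac{t^4}{120}-\frac{t^6}{5040}$ and $D(t)=q(t)^3-1+\frac{3t}{2\pi}$. Then $D''(0)=-1$ but $D''(\pi/2)\approx0.48>0$. The same happens with $\sinc t$ in place of $q$, because $(\sinc t)^3$ is convex near $\pi/2$. So $D$ is not concave, and ``concave and positive at the endpoints'' is unavailable.

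The subdivision alternative is left unspecified, and it is not routine. Crude monotonicity on subintervals cannot start at $t=0$, where $D(0)=0$. Near $t=0$ you need something like $(\sinc t)^3\ge(1-t^2/6)^3\ge1-t^2/2\ge1-\frac{3t}{2\pi}$ for $t\le3/\pi$. Near $\pi/2$ the margin is thin: $D(\pi/2)\approx0.008$ and $D'(\pi/2)\approx-0.016$. A clean way to finish your route is to show that $D'$ changes sign exactly once on $[0,\pi/2]$ and then check $D(\pi/2)>0$. That is a genuine computation you have not carried out.

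\textbf{How the paper closes it.} It inserts an intermediate function: $1-\frac{3t}{2\pi}\le\cos^4\frac t2\le(\sinc t)^3$. The middle term equals $\frac14$ at $t=\pi/2$, so it meets the line $1-\frac{3t}{2\pi}$ at both endpoints.
\begin{itemize}
\item Right inequality: $\frac{d}{dt}\log\bigl((\sinc t)^3/\cos^4\frac t2\bigr)=\bigl(t(2+\cos t)-3\sin t\bigr)/(t\sin t)$. The numerator vanishes to third order at $0$ with third derivative $t\sin t\ge0$, so the ratio is nondecreasing from its value $1$ at $t=0$.
\item Left inequality: $H(t)=\cos^4\frac t2-1+\frac{3t}{2\pi}$ vanishes at $0$ and $\pi/2$, and $H'$ changes sign only once, from positive to negative.
\end{itemize}
Using this comparison in place of the Taylor-polynomial step would complete your argument.
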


\begin{proof}
Let $a = \widehat f(0)$ and normalize $\|f\|_\infty\le1$. The case $t=\pi$ is immediate since
$A_\pi f=a$. For $0<t\le\pi/2$, the function $h=A_tf$ is Lipschitz, with
\[
|h|\le1,\qquad
h'(\theta)=\frac{f(\theta+t)-f(\theta-t)}{2t}\quad\text{a.e.},
\qquad |h'|\le\frac1t.
\]
Since $\widehat h(0)=a$, Lemma~\ref{lem-lipschitz} gives
$\|A_tf-a\|_3^3\le1-\frac{3t}{2\pi}$, so~\eqref{eq-arc} follows from
\begin{equation}\label{eq-sinc-cubic}
1-\frac{3t}{2\pi}\le\cos^4\frac t2\le(\sinc t)^3,
\qquad 0\le t\le\frac\pi2 .
\end{equation}
For the second inequality in~\eqref{eq-sinc-cubic},
logarithmic differentiation gives
\[
\frac{d}{dt}\log\frac{(\sinc t)^3}{\cos^4(t/2)}
=\frac{2+\cos t}{\sin t}-\frac3t=\frac{t(2+\cos t)-3\sin t}{t\sin t}\ge0,
\]
because the numerator vanishes to third order at $t=0$ and its third
derivative is $t\sin t\ge0$; as the ratio tends to $1$ at $t=0$, it is at
least $1$. For the first inequality, $H(t)=\cos^4\frac t2-1+\frac{3t}{2\pi}$
vanishes at $t=0$ and at $t=\pi/2$, and
$2H'(t)=3/\pi - \sin t(1+\cos t)$. The subtracted term increases
on $[0,\pi/3]$ and decreases on $[\pi/3,\pi/2]$, ending at
$1>3/\pi$. Hence $H'$ changes sign exactly once, from positive to negative, and $H\ge0$ on $[0,\pi/2]$.

For $\pi/2<t<\pi$, splitting the circle at $\pm t$ and
substituting $\sigma\mapsto\pi+\sigma$ on the complementary arc gives
\[
t\,A_tf(\theta)+(\pi-t)A_{\pi-t}f(\theta+\pi)=\pi a,
\quad\text{hence}\quad
t(A_tf-a)=-(\pi-t)\left(A_{\pi-t}f(\cdot+\pi)-a\right).
\]
Since $0<\pi-t<\pi/2$, the case already proved gives
\[
\|A_tf-a\|_3\le\frac{\pi-t}{t}\,\sinc(\pi-t)=\sinc t .
\qedhere
\]
\end{proof}

\begin{lemma}\label{lem-mixture}
For $0<r<1$ let $w_r(t)=-\frac t\pi K_r'(t)$, $0<t<\pi$. Then $w_r\ge0$,
\begin{equation}\label{eq-mixture}
K_r=K_r(\pi)+\int_0^\pi w_r(t)\chi_t\,dt,
\end{equation}
and consequently
\begin{equation}\label{eq-mixture-moments}
K_r(\pi)+\int_0^\pi w_r(t)\,dt=1,
\qquad
\int_0^\pi w_r(t) \sinc t\,dt=r .
\end{equation}
\end{lemma}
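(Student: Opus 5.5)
The plan is a layer-cake (integration by parts in $t$) argument, treating $K_r$ as a function of $\theta\in[-\pi,\pi]$ that is even and decreasing in $|\theta|$.

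\textbf{Sign of $w_r$.} Differentiating $K_r(\theta)=(1-r^2)/(1-2r\cos\theta+r^2)$ gives
\[
K_r'(t)=-\frac{2r(1-r^2)\sin t}{(1-2r\cos t+r^2)^2}\le0,\qquad 0<t<\pi,
\]
so $w_r(t)=-\frac t\pi K_r'(t)\ge0$.

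\textbf{The mixture identity.} Fix $\theta$ with $0\le|\theta|<\pi$. Since $\chi_t(\theta)=\frac\pi t\mathbf 1_{\{|\theta|<t\}}$, we get
\[
\int_0^\pi w_r(t)\chi_t(\theta)\,dt=\int_{|\theta|}^\pi \frac\pi t\cdot\Bigl(-\frac t\pi K_r'(t)\Bigr)dt=-\int_{|\theta|}^\pi K_r'(t)\,dt=K_r(|\theta|)-K_r(\pi).
\]
By evenness this equals $K_r(\theta)-K_r(\pi)$, which is \eqref{eq-mixture}. The point $\theta=\pi$ is a null set, so the identity holds a.e.; at $\theta=\pi$ it holds in any case, since there the integral is $0$. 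Near $t=0$ the integrand $w_r\chi_t$ is bounded, because $w_r(t)=O(t^2)$ and $\chi_t\le\pi/t$, so there are no integrability issues.

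\textbf{The moment identities.} Integrate \eqref{eq-mixture} against $dm$. Since $\int_\T K_r\dd=1$ and $\int_\T\chi_t\dd=1$, Tonelli (everything is nonnegative) gives the first identity in \eqref{eq-mixture-moments}. Next take the first Fourier coefficient. We have $\widehat{K_r}(1)=r$, the constant $K_r(\pi)$ has vanishing first coefficient, and $\widehat{\chi_t}(1)=\sinc t$. Fubini applies because $|\chi_t e^{-i\theta}|\le\chi_t$ and $\int_0^\pi w_r\,dt<\infty$ by the first identity. This yields $\int_0^\pi w_r(t)\sinc t\,dt=r$.

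The only step requiring care is justifying the interchange of integrals, and integrability of $w_r$ follows from positivity. As a sanity check, one can also compute directly that $\int_0^\pi w_r\,dt=\frac1\pi\int_0^\pi (K_r(t)-K_r(\pi))\,dt=1-K_r(\pi)$ by integration by parts, which confirms the first identity independently.
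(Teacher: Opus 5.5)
Your proposal is correct and follows the paper's proof: the sign of $w_r$ comes from $K_r$ being even and decreasing on $[0,\pi]$, \eqref{eq-mixture} comes from evaluating $\int_{|\theta|}^\pi$ pointwise, and \eqref{eq-mixture-moments} comes from the Fourier coefficients at $n=0$ and $n=1$. You also justify the Tonelli/Fubini interchanges and the behaviour of $w_r$ near $t=0$, which the paper leaves implicit.
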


\begin{proof}
The kernel $K_r$ is even and decreasing on $[0,\pi]$, so $w_r\ge0$, and for
$|\theta|\le\pi$,
\[
\int_0^\pi w_r(t)\chi_t(\theta)\,dt
=\int_{|\theta|}^\pi\left(-\frac t\pi K_r'(t)\right)\frac\pi t\,dt
=K_r(|\theta|)-K_r(\pi),
\]
which is~\eqref{eq-mixture}. The two identities
in~\eqref{eq-mixture-moments} are the Fourier coefficients
of~\eqref{eq-mixture} at $n=0$ and $n=1$, computed from
$\widehat{\chi_t}(0)=1$, $\widehat{\chi_t}(1)=\sinc t$ and
$\widehat{K_r}(0)=1$, $\widehat{K_r}(1)=r$.
\end{proof}

\begin{proof}[Proof of Theorem~\ref{thm-centered}]
Let $f$ be real with $\|f\|_\infty\le1$ and $a=\widehat f(0)$. Convolving
\eqref{eq-mixture} with $f$ and using the first identity
in~\eqref{eq-mixture-moments} to absorb the constants,
\[
\widetilde P_rf=P_rf-a=\int_0^\pi w_r(t)\left(A_tf-a\right)dt .
\]
Minkowski's integral inequality, Lemma~\ref{lem-arc} and the second identity
in~\eqref{eq-mixture-moments} give
\[
\|\widetilde P_rf\|_3\le\int_0^\pi w_r(t)\|A_tf-a\|_3\,dt
\le\int_0^\pi w_r(t) \sinc t\,dt=r .
\]
Smaller exponents follow because $L^p$ norms increase with $p$.

For sharpness, let $f=\pm1$ on sets of measure $\frac{1\pm a}2$, so that
$\widehat f(0)=a$ and
\[
\|f-a\|_p^p=\frac{(1+a)(1-a)^p+(1-a)(1+a)^p}2
=1+\frac{p(p-3)}2a^2+O(a^4),\qquad a\to0 .
\]
For $p>3$ this exceeds $1$ for all small $a\ne0$. Since $P_rf\to f$ in
$L^p$ as $r\uparrow1$, we get $\widetilde P_rf\to f-a$, so the estimate
$\|\widetilde P_rf\|_p\le r\|f\|_\infty$ fails for all $r$ close to $1$.
\end{proof}

\section{Complex functions with zero mean}\label{sec:complex}

Let $\Pp$ be the Riesz projection onto nonnegative frequencies, including
the constant term: $\widehat{\Pp f}(n)=\mathbf 1_{\{n\ge0\}}\widehat f(n)$.
We use two known bounds,
\begin{equation}\label{eq-Riesz-inputs}
\|\Pp f\|_4\le\|f\|_\infty,
\qquad
\|\Pp g\|_3\le\|g\|_4 ,
\end{equation}
both for complex-valued functions. The first is due to Marzo and
Seip~\cite[Theorem 1]{MarzoSeip}, who showed that $\|\Pp
f\|_{p}\le\|f\|_{\infty}$ holds precisely when $p\le4$. The second is the
case $q=4$ of $\Pp$ being a contraction from $L^q$ to 
$L^{4(1-1/q)}$, as was recently proved by Fu
and Li~\cite{FuLi}. The latter contraction property was conjectured by Brevig, Ortega-Cerd\`a,
Seip and Zhao~\cite[Conjecture 4]{BOSZ}; see~\cite{BLS} for an overview of the subject.  

\begin{proposition}\label{prop-interval}
For integers $a\le b$ let $\Pi_{[a,b]}f=\sum_{n=a}^b\widehat f(n)\zeta^n$.
Then
\begin{equation}\label{eq-interval-bound}
\|\Pi_{[a,b]}f\|_p\le\|f\|_\infty,
\qquad f\in L^\infty(\T),\ 0<p\le3,
\end{equation}
and the operator norm is exactly one.
\end{proposition}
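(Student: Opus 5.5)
We write $\Pi_{[a,b]}$ as a composition of two Riesz-type projections and apply the two bounds in \eqref{eq-Riesz-inputs}, one after the other. By \eqref{eq-Riesz-inputs} the first projection, applied to an $L^\infty$ function, lands in $L^4$ with norm at most $\|f\|_\infty$. The second, applied to an $L^4$ function, lands in $L^3$ with norm at most its $L^4$ norm.

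\textbf{Step 1: reduce to $a=0$.} Modulation $M_kf=\zeta^kf$ is an isometry on every $L^p$ and conjugates frequency-interval projections into shifted ones: $\Pi_{[a,b]}f=\zeta^{a}\,\Pi_{[0,b-a]}(\zeta^{-a}f)$. So it suffices to treat $\Pi_{[0,N]}$ with $N=b-a\ge0$.

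\textbf{Step 2: factor the projection.} Write $\Pi_{[0,N]}=\Pp\circ Q$, where $Q$ keeps the frequencies $n\le N$. The operator $Q$ is the Riesz projection reflected and modulated. Let $\widetilde f(\zeta)=f(\overline\zeta)$, which is an isometry on every $L^p$ and sends frequency $n$ to $-n$. Then
\[
Qf=\zeta^{N}\,\widetilde{\Pp\bigl(\widetilde{\zeta^{-N}f}\bigr)}.
\]
The two operators commute as Fourier multipliers, so we may also write $\Pi_{[0,N]}=Q\circ\Pp$. The order does not matter for the argument.

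\textbf{Step 3: chain the two bounds.} By the first inequality in \eqref{eq-Riesz-inputs} and the isometries above, $\|Qf\|_4\le\|\zeta^{-N}f\|_\infty=\|f\|_\infty$. By the second inequality in \eqref{eq-Riesz-inputs}, applied to $g=Qf$,
\[
\|\Pi_{[0,N]}f\|_3=\|\Pp(Qf)\|_3\le\|Qf\|_4\le\|f\|_\infty .
\]
For $0<p<3$ the bound follows because $L^p$ norms increase with $p$ on a probability space.

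\textbf{Step 4: the norm is exactly one.} Take $f=\zeta^a$. Then $\Pi_{[a,b]}f=\zeta^a$, and both sides of \eqref{eq-interval-bound} equal $1$.

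The only real content is the factorization in Step 2. It lets the sharp endpoint results of Marzo--Seip and Fu--Li be composed, and it is exactly why the exponent is $3$: $L^\infty\to L^4$ followed by $L^4\to L^3$. The one point to check carefully is that reflection and modulation preserve the relevant norms and map $\Pp$ onto the complementary half-line projection. This is routine, so I expect no serious obstacle.
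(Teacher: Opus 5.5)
Your proof is correct and is essentially the paper's argument: the paper also writes $\Pi_{[a,b]}=Q_{\le b}Q_{\ge a}$, obtains both one-sided projections from $\Pp$ by modulation and reflection, and chains the Marzo--Seip bound with the Fu--Li bound. Your only departures are cosmetic: you first normalize to $a=0$, and you let the reflected projection carry the $L^\infty\to L^4$ step. Neither matters, since both estimates transfer to either one-sided projection.
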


\begin{proof}
The modulations $M_kf(\zeta)=\zeta^kf(\zeta)$ and the reflection
$Jf(\zeta)=f(\overline\zeta)$ are isometries of every $L^p(\T)$, and they
conjugate $\Pp$ into the one-sided projections
\[
Q_{\ge a}=M_a\Pp M_{-a},\qquad Q_{\le b}=M_bJ\Pp JM_{-b},
\]
whose multipliers are $\mathbf 1_{\{n\ge a\}}$ and $\mathbf 1_{\{n\le b\}}$.
Both estimates in~\eqref{eq-Riesz-inputs} therefore hold for $Q_{\ge a}$ and
$Q_{\le b}$, and comparing multipliers gives
$\Pi_{[a,b]}=Q_{\le b}Q_{\ge a}$. Applying the first estimate and then the
second,
\[
\|\Pi_{[a,b]}f\|_3\le\|Q_{\ge a}f\|_4\le\|f\|_\infty .
\]
All operators may be defined first on $L^2$, where the multiplier identity is
immediate, and $L^\infty\subset L^2$. Smaller exponents follow from
monotonicity of $L^p$ norms, and $f(\zeta)=\zeta^a$ gives equality.
\end{proof}

In particular, the symmetric partial sums $S_N=\Pi_{[-N,N]}$ are contractions from $L^\infty(\T)$ to $L^3$.

\begin{proof}[Proof of Theorem~\ref{thm-complex}]
We will prove a more general inequality: for all $f\in L^\infty(\T)$
\begin{equation}\label{eq-complex-correction}
\big\|P_rf-(1-r)\widehat f(0)\big\|_3 \le r\|f\|_\infty .
\end{equation}
Since $(1-r)\sum_{N\ge|n|}r^N=r^{|n|}$, comparing multipliers on $L^2$
gives the classical representation of $P_r$ as an Abel mean of partial sums,
$P_r=(1-r)\sum_{N\ge0}r^NS_N$. The term $N=0$ is $(1-r)\widehat f(0)$, so
\begin{equation}\label{eq-abel-mean}
P_rf-(1-r)\widehat f(0)=(1-r)\sum_{N=1}^\infty r^NS_Nf .
\end{equation}
By Proposition~\ref{prop-interval} each $S_N$ is a contraction from
$L^\infty$ to $L^3$, so the series converges absolutely in $L^3$, and
\[
\big\|P_rf-(1-r)\widehat f(0)\big\|_3
\le(1-r)\sum_{N=1}^\infty r^N\|f\|_\infty=r\|f\|_\infty ,
\]
which is~\eqref{eq-complex-correction}. If $\widehat f(0)=0$ this says
$\|P_rf\|_3\le r\|f\|_\infty$; since $P_r\zeta=r\zeta$, the norm of $P_r$
from $L^\infty_0(\T)$ to $L^p$, $p\le3$, is exactly $r$.
\end{proof}

\begin{remark}\label{rem-decreasing}
More generally, let $(\lambda_n)_{n\ge0}$ be nonnegative and nonincreasing,
with limit $\ell$, and let $\widehat{\Lambda f}(n)=\lambda_{|n|}\widehat f(n)$.
Summation by parts gives
$\Lambda=\ell I+\sum_{N\ge0}(\lambda_N-\lambda_{N+1})S_N$, a combination
with nonnegative coefficients of total weight $\lambda_0$, so the same
argument shows $\|\Lambda f\|_3\le\lambda_0\|f\|_\infty$. The Poisson
operator is the case $\lambda_n=r^n$.
\end{remark}
 
\begin{remark}\label{rem-interval-sharpness}
The exponent $3$ in Proposition~\ref{prop-interval} is optimal already for
$S_1$, and it is also optimal in~\eqref{eq-complex-correction}. Indeed, let
$f_\varepsilon(e^{i\theta})=e^{i\varepsilon\cos\theta}$, $\varepsilon > 0$. 
Expanding the exponential and retaining the frequencies $-1,0,1$ gives
$S_1f_\varepsilon=c_\varepsilon+i d_\varepsilon\cos\theta$, where
\[
c_\varepsilon=1-\frac{\varepsilon^2}{4}
+\frac{\varepsilon^4}{64}+O(\varepsilon^6),
\qquad
d_\varepsilon=\varepsilon-\frac{\varepsilon^3}{8}+O(\varepsilon^5).
\]
It follows that 
\begin{equation}\label{eq-interval-obstruction}
\int_\T|S_1f_\varepsilon|^p\dd
=1+\frac{p(p-3)}{64}\varepsilon^4+O_p(\varepsilon^6).
\end{equation}
For every $p>3$ this exceeds $1$ when $\varepsilon>0$ is sufficiently
small. Since, by~\eqref{eq-abel-mean},
\[
r^{-1}\bigl(P_rf_\varepsilon-(1-r)\widehat{f_\varepsilon}(0)\bigr)
=(1-r)\sum_{N\ge1}r^{N-1}S_Nf_\varepsilon\to S_1f_\varepsilon
\]
uniformly as $r\downarrow0$, the inequality
\eqref{eq-complex-correction} also fails for every $p>3$.
\end{remark}

We conclude the section with the upper bound in~\eqref{eq-pc-bounds}. The
underlying obstruction, in the limit $r\to0$, is due to Brevig,
Ortega-Cerd\`a and Seip~\cite{BOS}; we include the short proof for
completeness.

\begin{proposition}\label{prop-complex-obstruction}
For every $p>4$ there exist $f\in L^\infty_0(\T)$ with $|f|=1$ and
$0<r<1$ such that $\|P_rf\|_p>r$.
\end{proposition}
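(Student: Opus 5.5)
The plan is to reduce to the limit $r\to0$ and then find a unimodular $f$ by perturbing $f(\zeta)=\zeta$. If $\widehat f(0)=0$ and $|f|=1$, then
\[
\frac{P_rf}{r}=\widehat f(1)\zeta+\widehat f(-1)\overline\zeta+\sum_{|n|\ge2}r^{|n|-1}\widehat f(n)\zeta^n .
\]
The tail is bounded in sup norm by $\sum_{|n|\ge 2} r^{|n|-1}|\widehat f(n)| = O(r)$, since $|\widehat f(n)|\le1$. Hence $P_rf/r\to T f:=\widehat f(1)\zeta+\widehat f(-1)\overline\zeta$ uniformly. It therefore suffices to produce, for each $p>4$, a unimodular mean-zero $f$ with $\|Tf\|_p>1$. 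Then $\|P_rf\|_p>r$ holds for all sufficiently small $r$.

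For the construction, take $f_\varepsilon(e^{i\theta})=e^{i\theta}e^{i\varepsilon\cos2\theta}$. Clearly $|f_\varepsilon|=1$. Expanding the second factor produces only even frequencies, so $f_\varepsilon$ has only odd frequencies and $\widehat{f_\varepsilon}(0)=0$ automatically. The coefficients needed come from the Jacobi--Anger/Bessel expansion of $e^{i\varepsilon\cos2\theta}$:
\[
\alpha:=\widehat{f_\varepsilon}(1)=J_0(\varepsilon)=1-\tfrac{\varepsilon^2}{4}+O(\varepsilon^4),
\qquad
\beta:=\widehat{f_\varepsilon}(-1)=iJ_1(\varepsilon)=\tfrac{i\varepsilon}{2}+O(\varepsilon^3).
\]
Here $\alpha$ is real and $\beta$ is purely imaginary.

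Next compute the size of $Tf_\varepsilon$ pointwise:
\[
|Tf_\varepsilon|^2=|\alpha|^2+|\beta|^2+2\Real(\alpha\overline\beta\zeta^2)=1+\varepsilon\sin2\theta-\tfrac{\varepsilon^2}{4}+O(\varepsilon^3).
\]
Raise this to the power $p/2$ and integrate, using $\int\sin2\theta\dd=0$ and $\int\sin^22\theta\dd=\frac12$:
\[
\int_\T|Tf_\varepsilon|^p\dd=1-\frac{p}{8}\varepsilon^2+\frac{p(p-2)}{16}\varepsilon^2+O(\varepsilon^3)=1+\frac{p(p-4)}{16}\varepsilon^2+O(\varepsilon^3).
\]
For $p>4$ this exceeds $1$ once $\varepsilon$ is small, and combined with the first step this finishes the proof. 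As a consistency check, the critical exponent $4$ that appears here matches the upper bound in \eqref{eq-pc-bounds}.

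The main obstacle is bookkeeping rather than ideas. One must make sure that every $O(\varepsilon^2)$ contribution to $|Tf_\varepsilon|^2$ is captured: the $\varepsilon^2$ correction in $\alpha$ enters only through $|\alpha|^2$, and the $O(\varepsilon^3)$ error in $\beta$ does not affect the $\varepsilon^2$ term. One must also justify the Taylor expansion of $x\mapsto x^{p/2}$ uniformly in $\theta$, which is fine because $|Tf_\varepsilon|^2$ stays close to $1$. If the Bessel coefficients turn out to be awkward, I would instead expand $e^{i\varepsilon\cos2\theta}$ directly to second order and read off the frequencies $\pm1$ of $\zeta(1+i\varepsilon\cos2\theta-\frac{\varepsilon^2}{2}\cos^22\theta)$, which gives the same values of $\alpha$ and $\beta$ to the needed order.
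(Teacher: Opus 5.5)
Your proof is correct and follows the paper's route: reduce to the limit $r\downarrow0$, where $P_rf/r\to\widehat f(1)\zeta+\widehat f(-1)\overline\zeta$ uniformly, and perturb $f(\zeta)=\zeta$ by a unimodular function with only odd frequencies so that the $p$-th moment of this limit is $1+c\,p(p-4)\varepsilon^2+O(\varepsilon^3)$ with $c>0$. The only difference is the test function: the paper's $\zeta(1-\varepsilon\overline\zeta^{\,2})/(1-\varepsilon\zeta^2)$ has coefficients at $\pm1$ exactly $1-\varepsilon^2$ and $-\varepsilon$ from a geometric series (giving $c=1/4$), while your $\zeta e^{i\varepsilon\cos2\theta}$ needs the Bessel expansion to second order (giving $c=1/16$).
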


\begin{proof}
For $0<\varepsilon<1$ put
\[
f_\varepsilon(\zeta)=\zeta\,
\frac{1-\varepsilon\overline\zeta^{\,2}}{1-\varepsilon\zeta^2},
\qquad\zeta\in\T .
\]
Then $|f_\varepsilon|=1$, and $f_\varepsilon(-\zeta)=-f_\varepsilon(\zeta)$,
so only odd frequencies occur and $\widehat{f_\varepsilon}(0)=0$. Expanding
the geometric series,
\[
f_\varepsilon(\zeta)=-\varepsilon\overline\zeta
+(1-\varepsilon^2)\sum_{k\ge0}\varepsilon^k\zeta^{2k+1},
\]
an absolutely convergent series, so that $P_rf_\varepsilon/r\to
g_\varepsilon:=(1-\varepsilon^2)\zeta-\varepsilon\overline\zeta$ uniformly as
$r\downarrow0$. Factoring out $(1-\varepsilon^2)\zeta$ and using
$\int_\T|1-\tau\zeta|^p\dd=1+\frac{p^2}4\tau^2+O_p(\tau^4)$ with
$\tau=\varepsilon/(1-\varepsilon^2)$,
\begin{equation}\label{eq-obstruction-expansion}
\int_\T|g_\varepsilon|^p\dd=1+\frac{p(p-4)}4\varepsilon^2+O_p(\varepsilon^4).
\end{equation}
For $p>4$ choose $\varepsilon$ small with $\int_\T|g_\varepsilon|^p\dd>1$,
and then $r$ small enough that $\|P_rf_\varepsilon\|_p>r$.
\end{proof}

\section{Conjectures}\label{sec:questions}

Theorem~\ref{thm-complex} and
Proposition~\ref{prop-complex-obstruction} give $3\le p_\C\le4$, and we
expect the upper bound to be the truth.

\begin{conjecture}\label{conj-complex}
The Poisson operator is Schwarz-contractive from $L^\infty_0(\T)$ to
$L^4(\T)$; that is, $\int_\T|P_rf|^4\dd\le r^4$ whenever $\|f\|_\infty\le1$
and $\widehat f(0)=0$.
\end{conjecture}

In~\cite{KovalevYang}, Yang and the author proved the limiting case $r\to 0$ of Conjecture~\ref{conj-complex}. Brevig, Ortega-Cerd\`a and Seip~\cite{BOS} showed that even in this infinitesimal version, the exponent $4$ cannot be replaced by a larger one. 

Although the proof of Theorem~\ref{thm-centered} breaks down for complex-valued functions, numerical evidence suggests that its statement, as well as the key estimate in Lemma~\ref{lem-arc}, remain true.

\begin{conjecture}\label{conj-complex-centered}
The centered Poisson operator $\widetilde P_rf=P_rf-\widehat f(0)$ is Schwarz-contractive from $L^\infty(\T)$ to
$L^3(\T)$; that is, $\int_\T|\widetilde P_rf |^3\dd\le r^3$ whenever $\|f\|_\infty\le1$.
\end{conjecture}

\section{AI use disclosure}

The author acknowledges the use of AI tools, specifically ChatGPT by OpenAI, Claude by Anthropic, and Gemini by Google.

\end{document}